\DeclareSymbolFont{script}{U}{eus}{m}{n}
\DeclareSymbolFontAlphabet{\amathscr}{script}
\DeclareMathSymbol{\Wedge}{0}{script}{"5E}
\DeclareMathAlphabet{\mathrmsl}{OT1}{cmr}{m}{sl}
\newenvironment{dedication}
  {
   \thispagestyle{empty}
   \vspace*{0.2cm}
   \itshape             
   \raggedleft          
  }
  {\par 
   \vspace{1cm} 
  }
\newtheorem{lemma}{Lemma}
\newtheorem{prop}{Proposition}
\newtheorem{thm}{Theorem}
\newtheorem{cor}{Corollary}
\theoremstyle{definition}
\newtheorem{defn}{Definition}
\theoremstyle{remark}
\newtheorem{rem}{Remark}
\newcommand{\R}{{\mathbb R}}
\newcommand{\C}{{\mathbb C}}
\newcommand{\T}{{\mathbb T}}
\newcommand{\Sph}{{\mathbb S}}
\newcommand{\cL}{{\mathcal L}}
\newcommand{\Scal}{{\rm Scal}}
\newcommand{\vol}{\mathit{vol}}
\newcommand{\tor}{{\mathfrak t}}
\newcommand{\Hess}{\mathop{\mathrm{Hess}}}
\newcommand{\Pol}{\mathrm P}
 \newcommand{\tstX}{\mathscr{X}}
 \newcommand{\tstL}{\mathscr{L}}
\newcommand{\Fut}{{\rm Fut}}
\newcommand{\Aut}{\mathrm{Aut}}
\newcommand{\proj}{{\mathrmsl proj}}
\newcommand{\Ric}{{\mathrm{Ric}}}
\subjclass[2020]{Primary 53C25; Secondary 32J27, 14J45, 32Q26.}
\begin{document}
\begin{dedication}
Dedicated to Professor T. Mabuchi \\on the occasion of his 75th birthday.  
\end{dedication}

\title[Mabuchi's observation]{Mabuchi K\"ahler solitons versus extremal K\"ahler metrics and beyond}

\author{Vestislav Apostolov}
\address{V.\,Apostolov\\ D{\'e}partement de Math{\'e}matiques\\ Universit\'e du Qu\'ebec \`a Montr\'eal \\
 and \\ Institute of Mathematics and Informatics\\ Bulgarian Academy of Sciences}
\email{\href{mailto:apostolov.vestislav@uqam.ca}{apostolov.vestislav@uqam.ca}}

\author{Abdellah Lahdili}
\address{A.\,Lahdili\\ D{\'e}partement de Math{\'e}matiques\\ Universit\'e du Qu\'ebec \`a Montr\'eal }
\email{\href{mailto:lahdili.abdellah@gmail.com}{lahdili.abdellah@gmail.com}}

\author{Yasufumi Nitta}
\address{Y.\,Nitta \\ Department of Mathematics\\ Faculty of Science Division II\\ Tokyo University of Science}
\email{\href{mailto:nitta@rs.tus.ac.jp}{nitta@rs.tus.ac.jp}}

\thanks{V. Apostolov was supported by an NSERC Discovery grant and a``Connect Talent'' grant of the Region de la Pays de la Loire. A. Lahdili was supported by Aarhus University and the UQAM. Y. Nitta was supported by JSPS KAKENHI Grant Number JP21K03234. The authors thank LMJL of Nantes University and the CIRGET of UQAM for hospitality during the preparation of this work. They are grateful to the referees for careful reading of the manuscript and valuable suggestions. }

\begin{abstract} Using the Yau-Tian-Donaldson type correspondence for $v$-solitons established by Han-Li, we show that a smooth complex $n$-dimensional Fano variety admits a Mabuchi soliton provided it admits an extremal K\"ahler metric whose scalar curvature is strictly less than $2(n+1)$. Combined with  previous observations by  Mabuchi and Nakamura in the other direction, this gives a characterization of  the existence of Mabuchi solitons in terms of the existence of  extremal K\"ahler metrics on Fano manifolds.
An extension of this correspondence to $v$-solitons is also obtained.
\end{abstract}

\maketitle

\section{Introduction} 
A number of different notions of \emph{special} K\"ahler metrics have emerged in the last 20 years or so, in connection with Calabi's seminal program \cite{Calabi55} of finding a canonical representative of a given deRham class $\alpha\in H^2_{dR}(X, \R)$ of K\"ahler metrics on a smooth compact K\"ahler manifold $X$. Lead by a natural variational approach, Calabi himself proposed the notion of an \emph{extremal} K\"ahler metric as a candidate for such a representative.  Recall that a  K\"ahler $\omega \in \alpha$ is extremal if  the flow of the gradient vector of its scalar curvature preserves the complex structure of $X$. K\"ahler metrics with constant scalar curvature (cscK) are a special case of extremal K\"ahler metrics.

In the case of a smooth Fano variety $X$  a cscK metric in $\alpha=2\pi c_1(X)$ is necessarily a K\"ahler--Einstein metric with scalar curvature equal to $2n$. The existence problem for K\"ahler-Einstein metrics is now understood in terms of the Yau--Tian--Donaldson  (YTD) conjecture~\cite{Yau90, Tia97, Do02}  which states that  $X$ admits a K\"ahler-Einstein metric in $2\pi c_1(X)$ if and only if the anticanonical polarization $(X, K^{-1}_X)$ is K-polystable. There are, by now, many different proofs of this conjecture \cite{Tia15, CSW18, Zha23, BBJ21, Li22}, following
the initial work of Chen–Donaldson–Sun \cite{CDS1, CDS2, CDS3} (who proved K-polystability implies
existence) and Tian and Berman (who proved existence implies K-stablity)\cite{Tia97,Berman-Inv}. 

Beyond the study of K\"ahler-Einstein metrics on $(X, 2\pi c_1(X))$, other notions of canonical K\"ahler metrics have been considered. These  allow to treat cases where a K\"ahler-Einstein metric does not exist due to obstructions  in terms of the automorphisms of $X$ \cite{matsushima, licherowicz, Futaki0}. Following initial existence results \cite{Cao,S},  Tian-Zhu~\cite{TZ1,TZ2} initiated a comprehensive study of the so-called \emph{K\"ahler-Ricci solitons} (KRS) on $(X, 2\pi c_1(X))$. Martelli--Sparks--Yau~\cite{MSY} developed the theory of Calabi--Yau cones (or, equivalently, Sasaki--Einstein structures) defined on the affine cone $K_X^{\times}$ associated to $X$. Mabuchi~\cite{Mabuchi} introduced the notion of an \emph{M-soliton}  (or \emph{Mabuchi soliton}) and related it to the existence of extremal K\"ahler metrics in $\alpha = 2\pi c_1(X)$. These works prompted separate investigations of the corresponding existence theories, and the formulation and proofs of appropriate modifications of the YTD conjecture in each case, see respectively \cite{DSz,CSz,Hisamoto}. 

\bigskip
More recently, there have been developments providing a framework to treat the existence problems mentioned above all together: Building on a foundational work by Berman--Witt Nystr\"om~\cite{BN}, Han--Li~\cite{HL} studied the existence problem for the so-called \emph{$v$-soliton} K\"ahler metric $\omega \in 2\pi c_1(X)$ (called $g$-solitons in \cite{HL}). A $v$-soliton  is defined in terms of a fixed maximal compact torus $\T\subset \Aut(X)$ with associated canonical polytope $\Pol_X \subset \left({\rm Lie}(\T)\right)^*$,  and  a positive smooth function $v(x)>0$ on $\Pol_X$, via the equation
\[ {\rm Ric}(\omega) - \omega = \frac{1}{2}dd^c \log v(\mu_{\omega}).\]
In the above formula, $\omega$ is a $\T$-invariant K\"ahler metric in $2\pi c_1(X)$,  ${\rm Ric}(\omega)\in 2\pi c_1(X)$ is its Ricci form and $\mu_\omega : X \to \Pol_X$ is the canonically normalized $\T$-momentum map. Thus, K\"ahler--Einstein metrics correspond to $1$-solitons, KRS metrics to $e^{\ell}$-solitons~\cite{TZ1},  Calabi--Yau cone structures on $K_X^{\times}$ to $\ell^{-(n+2)}$-solitons~\cite{AJL} and Mabuchi solitons to $\ell$-solitons, where $\ell(x)$ is a suitably defined (and in general different for each  case) affine-linear function on $\Pol_X$. As an outcome, the work \cite{HL} gives a YTD type correspondence for the existence of a $v$-soliton on $(X, \T, 2\pi c_1(X))$, expressed in terms of a suitable notion of \emph{uniform $v$-weighted Ding stability}  of $(X, \T, 2\pi c_1(X)))$ on $\T_{\C}$-equivariant test-configurations.

\bigskip
In this short paper, we use the Han-Li YTD type correspondence for $v$-solitons combined with a few elementary observations in order to establish the following result. 

\begin{thm}\label{main}[see Theorem~\ref{(1,w)-YTD}] Let $(X, \T)$ be a smooth complex $n$-dimensional Fano manifold and $v$ a smooth positive weight function on $\Pol_X$ normalized so that  $\vol_v(X)= \vol(X)$. Then  $X$  admits a  $\T$-invariant $v$-soliton metric in $2\pi c_1(X)$ if and only if there exists a (different in general) $\T$-invariant K\"ahler metric $\omega \in 2\pi c_1(X)$ whose scalar curvature satisfies
\[ \Scal(\omega) = 2\Big((n+1)-v(\mu_{\omega})\Big).\]
\end{thm}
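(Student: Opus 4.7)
My strategy is to place both sides of the equivalence into the common framework of uniform weighted stability on $\T_\C$-equivariant test configurations, and then to exploit a direct identity between the two relevant stability invariants specific to the pairing $w(x) = 2((n+1)-v(x))$.

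First, I would apply the Han--Li YTD correspondence recalled in the introduction to rephrase the existence of a $\T$-invariant $v$-soliton in $2\pi c_1(X)$ as uniform $v$-weighted Ding stability of $(X, \T, 2\pi c_1(X))$ on $\T_\C$-equivariant test configurations, equivalently as properness modulo the reduced automorphism group of the $v$-weighted Ding functional. On the other side, I would view the equation $\Scal(\omega) = 2\bigl((n+1)-v(\mu_\omega)\bigr)$ as a weighted cscK equation in the sense of Lahdili, with weight pair $(1, w)$ where $w(x) := 2((n+1)-v(x))$: because the first weight is $1$, no Laplacian correction appears in the weighted scalar curvature, and the equation is simply $\Scal(\omega) = w(\mu_\omega)$. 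The associated weighted Chen--Cheng / YTD type results then translate the existence of such an $\omega$ into uniform $(1, w)$-weighted K-stability, equivalently properness of the $(1, w)$-weighted Mabuchi energy.

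The heart of the argument is the comparison of the $v$-weighted Ding invariant and the $(1, w)$-weighted Donaldson--Futaki invariant of an arbitrary $\T_\C$-equivariant test configuration, and checking that the specific choice $w = 2((n+1)-v)$ makes them agree (or differ by a non-negative error vanishing on product configurations). Both invariants admit explicit expressions on the canonical polytope $\Pol_X$ as integrals against the associated Duistermaat--Heckman measures, together with a contribution coming from the log-canonical threshold / central fibre of the test configuration. The comparison should then reduce to a direct integration-by-parts identity on $\Pol_X$ combined with a standard manipulation of the Ricci potential on the Fano side, exactly in the way that in the unweighted case one compares the Ding and the Mabuchi functionals.

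The main obstacle is precisely this identity in the weighted setting. In the unweighted limit $v \equiv 1$, so that $w \equiv 2n$, it recovers the classical equivalence between Ding- and K-stability for Fano manifolds and the normalization $\Scal = 2n$ of the K\"ahler--Einstein equation; in the affine-linear case $v = \ell$, $w = 2((n+1)-\ell)$, it is exactly Mabuchi's original observation relating M-solitons to extremal K\"ahler metrics, refined in Nakamura's work. Once the identity is in hand, both implications of Theorem~\ref{main} follow at once, and the statement appears as the natural extension of Mabuchi's observation from affine-linear to arbitrary smooth positive weights.
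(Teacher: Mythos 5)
Your overall framework is the paper's: you correctly recognize $\Scal(\omega)=2\bigl((n+1)-v(\mu_\omega)\bigr)$ as the $(1,w)$-cscK equation with $w=2[(n+1)-v]$, you invoke the Han--Li correspondence on the soliton side, and you locate the crux in a comparison between the $v$-Ding and $(1,w)$-Mabuchi quantities. But the claim that ``once the identity is in hand, both implications follow at once'' conceals a genuine asymmetry, and closing the loop the way you propose would fail. Uniform $(1,w)$-K-stability is \emph{not} known to imply existence of a $(1,w)$-cscK metric -- the ``stability implies existence'' half of the weighted YTD conjecture is open for these weights -- so you cannot pass from $v$-Ding stability to the desired metric purely through test-configuration invariants. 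The paper instead proves the exact functional identity (Lemma~\ref{l:Mabuchi})
\[
{\bf M}_{1,2[(n+1)-v]}(\omega)=2\vol(X)\,{\bf D}_v(\omega)-\int_X \mathring{h}_\omega\,\omega^{[n]}+\int_X \mathring{h}_{\omega_0}\,\omega_0^{[n]},
\]
and uses $e^x\ge x+1$ to bound $\int_X\mathring{h}_\omega\,\omega^{[n]}$ from above, yielding the one-sided inequality ${\bf M}_{1,w}\ge 2\vol(X){\bf D}_v+C$. This transfers coercivity from ${\bf D}_v$ (which Han--Li provide when a $v$-soliton exists) to ${\bf M}_{1,w}$, after which He's extension of Chen--Cheng produces the $(1,w)$-cscK metric. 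Since the inequality only goes one way, the converse implication cannot be extracted from it.

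For the direction ``cscK $\Rightarrow$ soliton'' the paper therefore passes to the non-Archimedean level: existence of a $(1,w)$-cscK metric gives uniform $(1,w)$-K-stability by \cite{AJL}, and the identity ${\bf M}^{\rm NA}_{1,w}-2\vol(X){\bf D}_v^{\rm NA}={\bf M}^{\rm NA}_{1,2n}-2\vol(X){\bf D}_1^{\rm NA}$ shows the two invariants agree exactly on \emph{special} test configurations, where the computations of Berman and Boucksom--Hisamoto--Jonsson give ${\bf M}^{\rm NA}_{1,2n}=2\vol(X){\bf D}_1^{\rm NA}$. This is where your ``error vanishing on product configurations'' is too weak: the discrepancy vanishes on special (normal, klt central fibre) test configurations, not merely product ones, and one must crucially invoke Han--Li's theorem that uniform $v$-Ding stability tested only on special $\T_{\C}$-equivariant test configurations already suffices for the existence of a $v$-soliton. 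Without that reduction, agreement of the invariants on a subclass of test configurations proves nothing. So your proposal needs two distinct mechanisms -- the analytic coercivity transfer in one direction and the non-Archimedean identity restricted to special test configurations in the other -- rather than a single identity serving both.
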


Futaki-Mabuchi~\cite{FM} observed that there exists a unique affine-linear function $\ell_{\rm ext}(x)$ on $\Pol_X$  such that a $\T$-invariant K\"ahler metric in $2\pi c_1(X)$ is extremal iff $\Scal(\omega) = \ell_{\rm ext}(\mu_{\omega})$. Furthermore, Mabuchi~\cite{Mabuchi} proved that if $\tilde \omega \in 2\pi c_1(X)$ is an $\ell$-soliton for a positive affine-linear function $\ell$ on $\Pol_X$, then 
\[ \ell = (n+1) - \frac{1}{2} \ell_{\rm ext}. \]
 He thus obtained that the condition 
\begin{equation}\label{Mabuchi-constant}
 \max_{\Pol_X} \ell_{\rm ext}  < 2(n+1) \end{equation}
is a necessary condition for the existence of an $\ell$-soliton. On the other hand, Nakamura (see~\cite{Mabuchi}) proved,  using the result  of Chen-Cheng~\cite{CC} and the complement by  He~\cite{He}  that if $X$ admits a Mabuchi soliton then it also admits an extremal K\"ahler metric satisfying the above condition. 
This observation was recently extended for the weights corresponding to the so-called \emph{$\sigma$-solitons} by Nakagawa and Nakamura (\cite{NN24}). 
Finally, in \cite{NSY23, NS24}, the authors constructed examples of smooth Fano manifolds admitting extremal K\"ahler metrics in $2\pi c_1(X)$ for which the necessary condition \eqref{Mabuchi-constant} fails (and thus they do not admit Mabuchi solitons). As a direct corollary of  Theorem~\ref{main} above, we close this circle of ideas by establishing
\begin{cor} A smooth Fano manifold $(X, \T)$ admits a Mabuchi soliton if and only if the extremal affine linear function $\ell_{\rm ext}$ satisfies \eqref{Mabuchi-constant} and $2\pi c_1(X)$ admits an extremal metric.
\end{cor}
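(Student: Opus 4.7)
The plan is to derive the corollary directly from Theorem~\ref{main} by specializing the weight to
\[ v = \ell := (n+1) - \tfrac{1}{2}\ell_{\rm ext}, \]
i.e.\ the unique affine-linear function that links the Futaki-Mabuchi equation for extremal metrics with the defining equation of a Mabuchi soliton. Essentially all the work is done already; the corollary should fall out as a bookkeeping consequence of Theorem~\ref{main}, Mabuchi's identification of the affine weight of a Mabuchi soliton, and Futaki-Mabuchi's characterization of extremal K\"ahler metrics.

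Before invoking Theorem~\ref{main} with this choice of $v$, I would record two preliminary facts about $\ell$. Since $\ell_{\rm ext}$ is affine and $\Pol_X$ is compact, positivity of $\ell$ on $\Pol_X$ is equivalent to the condition \eqref{Mabuchi-constant}. Moreover, the topological identity $\bar S = 2n$, which holds for any K\"ahler metric in $2\pi c_1(X)$, together with the fact that the average of $\ell_{\rm ext}(\mu_\omega)$ equals $\bar S$, yields $\bar\ell = 1$. Hence the normalization $\vol_\ell(X) = \vol(X)$ required in Theorem~\ref{main} is automatic.

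For the forward implication, assume $X$ admits a Mabuchi soliton. By Mabuchi's identification recalled in the Introduction, this is an $\ell$-soliton with $\ell = (n+1) - \tfrac{1}{2}\ell_{\rm ext}$, whose positivity forces \eqref{Mabuchi-constant}. Theorem~\ref{main} applied to $v = \ell$ then produces a (generally different) $\T$-invariant K\"ahler metric $\omega \in 2\pi c_1(X)$ satisfying
\[ \Scal(\omega) = 2\bigl((n+1) - \ell(\mu_\omega)\bigr) = \ell_{\rm ext}(\mu_\omega), \]
and by Futaki-Mabuchi this is exactly the defining equation of an extremal metric. Conversely, if \eqref{Mabuchi-constant} holds and $\omega \in 2\pi c_1(X)$ is extremal, rearranging $\Scal(\omega) = \ell_{\rm ext}(\mu_\omega)$ gives $\Scal(\omega) = 2((n+1) - \ell(\mu_\omega))$ with $\ell > 0$, so Theorem~\ref{main} supplies an $\ell$-soliton in $2\pi c_1(X)$, which is a Mabuchi soliton.

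There is no substantive obstacle: the nontrivial input is fully encoded in Theorem~\ref{main}. The one small point to watch is the compatibility of the normalization in Theorem~\ref{main} with the particular weight $\ell$, but, as noted above, this is automatic in the Fano setting because $\bar S = 2n$ depends only on the class $2\pi c_1(X)$. The remaining content is then the elementary algebraic rearrangement $\Scal(\omega) = \ell_{\rm ext}(\mu_\omega) \iff \Scal(\omega) = 2((n+1) - \ell(\mu_\omega))$.
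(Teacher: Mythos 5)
Your proof is correct and is precisely the argument the paper intends: the corollary is obtained by specializing Theorem~\ref{main} to the affine weight $v=(n+1)-\tfrac{1}{2}\ell_{\rm ext}$, using Mabuchi's identification of the soliton weight and the Futaki--Mabuchi characterization of extremal metrics, with the normalization $\vol_v(X)=\vol(X)$ following, as you note, from $\int_X\Scal(\omega)\,\omega^{[n]}=2n\,\vol(X)$. The only point left implicit is that an extremal metric in $2\pi c_1(X)$ may be assumed $\T$-invariant after pulling back by an automorphism (Calabi's structure theorem for the isometry group of an extremal metric), which is needed before invoking $\Scal(\omega)=\ell_{\rm ext}(\mu_\omega)$; the paper glosses over this as well.
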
 

\section{Preliminaries} 
\subsection{Fano manifolds: notation and normalization}
In what follows, $X$ denotes a smooth compact complex manifold of complex dimension $n$, for which the anti-canonical bundle $K^{-1}_X$ is ample. Such an $X$ is  called a \emph{smooth Fano variety}. The Fano condition implies that $X$ is projective, and that the deRham class $\alpha=2\pi c_1(X)=2\pi c_1(K_X^{-1})$ contains K\"ahler metrics. 

Any K\"ahler metric $\omega \in \alpha$ is deRham cohomologous with the corresponding Ricci form $\Ric(\omega)$, and thus we can write 
\begin{equation}\label{Ricci-potential} \Ric(\omega) - \omega = \frac{1}{2} dd^c h_{\omega},\end{equation}
for a smooth function $h_{\omega}$ which (by the maximum principle) is unique up to an additive constant. Such a function will be referred to as a \emph{Ricci potential} of $\omega$;
we can further fix the additive constant  
by requiring that
\begin{equation}\label{normalized-Ricci}\int_X e^{\mathring{h}_{\omega}} \omega^{[n]} = \int_X \omega^{[n]}=:\vol(X),\end{equation}
where $\omega^{[n]}:= \omega^n/n!$ stands for the Riemannian volume form of the K\"ahler metric $\omega$. We shall  then refer to this uniquely defined Ricci potential $\mathring{h}_{\omega}$ as the \emph{normalized Ricci potential} of $\omega$.
In these terms, the \emph{K\"ahler--Einstein} condition 
\begin{equation}\label{eq:KE} \Ric(\omega)=\omega \end{equation}
is equivalent to $\mathring{h}_{\omega}=0$.

\bigskip
We shall next fix once for all a maximal compact real torus $\T$ inside the connected component of identity $\Aut_0(X)$ of the group of complex automorphisms of $X$.  The corresponding complex torus will be denoted by $\T_{\C}$. There is a canonical lift (still denoted by $\T$) of the action of $\T$ on $X$ to  an action on the canonical bundle $K_X$. The latter bundle has a further ${\Sph}^1$-extension of the lifted  $\T$ action, given  by fibre-wise multiplications with complex numbers $e^{i \theta}\in \Sph^1$. We denote by $\hat \T= \T \times \Sph^1$  the resulting $({\rm dim}(\T) +1)$-dimensional torus acting on $K_X$.
We shall respectively denote by $\tor$ and $\hat \tor$ the Lie algebras of $\T$ and $\hat{\T}$. 

We consider the space
$\mathcal{K}_{\alpha}^{\T}(X)$ of $\T$-invariant K\"ahler metrics $\omega$ on $X$, belonging to $\alpha$; by  a standard averaging argument, $\mathcal{K}_{\alpha}^{\T}(X) \neq\emptyset$. Introducing a base-point $\omega_0\in \mathcal{K}_{\alpha}^{\T}(X)$, we will identify $\mathcal{K}_{\alpha}^{\T}(X)$ with the Fr\'echet space ${\mathcal H}^{\T}_{\omega_0}(X)/\R$, where 
\[\mathcal{H}^{\T}_{\omega_0}(X):= \left\{ \varphi \in C^{\infty}(X)^{\T} \, \, | \, \, \omega_{\varphi}:= \omega_0 + dd^c\varphi >0\right\}\]
is the space of smooth $\T$-invariant K\"ahler potentials with respect to $\omega_0$.

For each $\omega \in \mathcal{K}_{\alpha}^{\T}(X)$, we let $H_{\omega}$ denote the Hermitian metric on $K_X$ whose Chern curvature is
$R^{H_{\omega}}= -i \omega$,  and by $\nabla^{H_\omega}$ the $H_{\omega}$-Chern connection on $K_X$. A basic fact in the theory~\cite{gauduchon-book} is that for any $\xi \in \tor$, and any smooth section $s\in C^{\infty}(X, K_X)$, we have 
\[ {\mathcal L}_{\xi} s = \nabla^{H_\omega}_{\xi} s -i \mu_{\omega}^{\xi} s,\]
where the smooth function $\mu_{\omega}^{\xi}$ satisfies
\[ \omega(\xi, \cdot) = -d\mu_{\omega}^{\xi}.\]
This gives rise to a \emph{canonically normalized} momentum map $\mu_{\omega} : X \to \tor^{*}$ whose image $\Pol_X$ is a compact convex polytope~\cite{Atiyah, GS}; one can further show (see e.g.~\cite{BN,lahdili}) that $\mu_{\omega}(\Pol_X)$ is independent of the choice of $\omega\in \mathcal{K}^{\T}_{\alpha}(X)$.  In this paper, we shall refer to $\Pol_X$ as the \emph{canonical polytope} of $(X,\T)$. 

\begin{rem}\label{r:canonical_normalization} In general, the $\T$-momentum map $\mu_{\omega} : X \to \tor^*$ is defined only up to a translation with an element of $\tor^*$;  the fact that in the Fano case there is  a canonical normalization for $\mu_{\omega}$ follows from the existence of a canonical lift of the $\T$-action on $X$ to $K_X$. 

An alternative way to define the canonical normalization for $\mu_{\omega} $(see e.g. \cite{AJL,TZ1}) is to require that  for any $\zeta\in \tor$, the function $\mu_{\omega}^{\zeta}:= \langle \mu_{\omega}, \zeta \rangle$ satisfies
\begin{equation}\label{normalized-mu}\int_X \mu_{\omega}^{\zeta} e^{h_{\omega}} \omega^{[n]} =0, \end{equation}
where $h_{\omega}$ is any Ricci potential of $\omega$.
\end{rem}
Once we have suitably normalized $\Pol_X$, we can define the \emph{Dustermaat-Heckman}
measure~\cite{DH} $d\mu_{\rm DH}$ on $\Pol_X$ as the push-forward via $\mu_{\omega}$ of the Riemannian measure of $(X, \omega)$: for any continuous function $f(x)$ on $\Pol_X$, we let
\begin{equation}\label{DH}
\int_{\Pol_X} f(x) d\mu_{\rm DH} := \int_X f(\mu_{\omega}) \omega^{[n]}.\end{equation}
The fact that the LHS is independent of the choice of $\omega\in \mathcal{K}^{\T}_\alpha(X)$ follows for instance from the $\T$-equivariant Moser's lemma (see also \cite{FM}).

\subsection{K\"ahler--Ricci solitons}\label{s:KRsoliton} Following \cite{TZ1}, a \emph{K\"ahler Ricci soliton} (KRS for short) is a K\"ahler metric $\omega \in 2\pi c_1(X)$ which satisfies 
\begin{equation}\label{KRS}
{\rm Ric}({\omega}) - \omega = -\frac{1}{2} \cL_{J\tau} \omega,
\end{equation}
where $\tau$ is a Killing vector field for the K\"ahler structure $\omega$. In the case $\tau=0$, \eqref{KRS} reduces to the K\"ahler--Einstein condition \eqref{eq:KE}. Tian--Zhu~\cite{TZ1} have extended the Matsushima's theorem to the case of a KRS, which in turn yields that any K\"ahler metric satisfying \eqref{KRS} must be invariant by the action of a maximal torus in $\Aut_0(X)$, containing the flow of $\tau$. Up to a pull back by an element of $\Aut_0(X)$, we can and will assume that a KRS on $X$ belongs to $\mathcal{K}^{\T}_{\alpha}(X)$ and $\tau \in \tor$. Thus, similarly to the K\"ahler--Einstein case, the KRS condition can be rewritten as 
\begin{equation}\label{KRS-h}
h_{\omega} = \mu_{\omega}^{\tau}\end{equation} or, equivalently,  
\begin{equation}\label{KRS-gradient}
{\rm Ric}({\omega})  -\omega = \frac{1}{2} dd^c \mu_{\omega}^{\tau}, \qquad \tau \in \tor.
\end{equation}
By Remark~\ref{r:canonical_normalization}, \eqref{DH} and \eqref{KRS-h}, if $X$ admits a KRS in $\mathcal{K}^{\T}_\alpha(X)$, then for any $\zeta\in \tor$, we have
\[\int_{\Pol_X} \langle \zeta, x \rangle e^{\langle \tau, x\rangle}d\mu_{\rm DH}=0.\]
The above condition means that $\tau$ is a critical point of the function $F: \tor \to \R$: \[ F(\zeta) := \int_{\Pol_X} e^{\langle \zeta, x\rangle}d\mu_{\rm DH}.\] Tian--Zhu~\cite{TZ1} further show that $F$ admits a unique critical point,  $\tau$, independent of the existence of a KRS on $X$. We shall refer to $\tau$ as the \emph{KRS vector field} of $(X, \T)$ and to the positive smooth function $v(x):=e^{\langle \tau, x \rangle}$ on $\tor^*$ as the \emph{KRS weight function}.

\subsection{$v$-solitons} The notion of KRS extends to the following more general geometric situation, studied by Berman--Witt Nystr\"om in \cite{BN} and,  more recently,  by Han--Li in \cite{HL}. We follow the notation of \cite[Sect.2]{AJL}.
\begin{defn}[$v$-soliton] In the setup as above, let $v(x)$ be a given positive function defined on $\Pol_X$. A K\"ahler metric $\omega \in \mathcal{K}^{\T}_{\alpha}(X)$ is called a \emph{$v$-soliton} if it satisfies
\begin{equation}\label{v-soliton}\Ric(\omega)-\omega = \frac{1}{2}dd^c \log v(\mu_{\omega}).\end{equation}
\end{defn}
Clearly, K\"ahler--Einstein metrics are $1$-solitons whereas KRS are $v=e^{\langle \tau, x\rangle}$-solitons.
Notice that if $\omega$ is a $v$-soliton it is also a $\lambda v$-soliton for any $\lambda>0$. To get rid of this ambiguity,  we shall sometimes consider \emph{normalized} weight functions $\mathring{v}:= \left(\frac{\int_{\Pol_X} d\mu_{\rm DH}}{\int_{\Pol_X} v d\mu_{\rm DH}}\right) v$,  i.e. such that
\begin{equation}\label{v-normalization} \vol_{\mathring{v}} (X):=\int_{X} \mathring{v}(\mu_{\omega}) \omega^{[n]}= \int_X \omega^{[n]} =: \vol(X).\end{equation}
We also notice that for any $v$-soliton, $h_{\omega}=\log(v(\mu_{\omega}))$, so by Remark~\ref{r:canonical_normalization},
the linear function
\begin{equation}\label{v-Futaki} \Fut_{v} : \tor \to \R, \qquad \Fut_v(\zeta):= \int_{\Pol_X} \langle \zeta, x\rangle v(x) d\mu_{\rm DH} \end{equation}
identically vanishes.
\begin{defn}[$v$-Futaki invariant] The linear function defined by \eqref{v-Futaki} is called the \emph{$v$-Futaki invariant} of $(X, \T)$. 
\end{defn}
We next define a functional ${\bf I}_{v}$ on the 
space $\mathcal{H}^{\T}_{\omega_0}(X)$ of $\T$-invariant K\"ahler potentials (see \cite{lahdili,HL}):
\begin{equation}\label{I_v}
 d_{\varphi} {\bf I}_v(\dot \varphi) = \int_X v(\mu_{\omega_{\varphi}}) \dot \varphi \omega_{\varphi}^{[n]}, \qquad {\bf I}_v(0)=0.\end{equation}
Following \cite{HL}, we introduce
\begin{defn}[$v$-Ding functional]\label{d-v-ding} The $v$-Ding functional is the map
${\bf D}_v : {\mathcal H}^{\T}_{\omega_0}(X) \to \R$ given by
\[
{\bf D}_v (\varphi) := - \left(\frac{{\bf I}_v(\varphi)}{\vol_v(X)}\right)- \frac{1}{2}\log\left(\int_M e^{\mathring{h}_{\omega_0}-2\varphi}\frac{\omega_0^{[n]}}{\vol(X)}\right),
\]
where we have set $\vol_v(X):=\int_{\Pol_X} v(x) d\mu_{\rm DH}= \int_X v(\mu_{\omega})\omega^{[n]}$ and $\mathring{h}_{\omega_0}$ stands for the normalized Ricci potential of the base point $\omega_0$, see \eqref{normalized-Ricci}.
\end{defn}
Notice that ${\bf D}_{v}$ does not change if we add a constant to $\varphi$, so it actually descends to a functional, denoted ${\bf D}_v(\omega_{\varphi})$, on the space $\mathcal{H}^{\T}_{\omega_0}(X)/\R \cong \mathcal{K}^{\T}_{\alpha}(X)$.

It is not hard to see that the differential of ${\bf D}_v$ is given by
\begin{equation}\label{d-Dv}
(d_{\omega_\varphi}{\bf D}_v) (\dot\varphi) = \int_X \dot{\varphi} \left( \frac{e^{\mathring{h}_{\omega_\varphi}}}{\vol(X)}-\frac{v(\mu_{\omega_\varphi})}{\vol_{v}(X)}\right)\omega^{[n]}_\varphi,
\end{equation}
so that the critical points of ${\bf D}_v$ are precisely the K\"ahler metrics $\omega_{\varphi}$ for which
\[e^{\mathring{h}_{\omega_{\varphi}}} = \vol(X)\mathring{v}(\mu_{\omega_{\varphi}})\]
i.e. the $v$-solitons.

Another consequence of the formula \eqref{d-Dv} is the following
\begin{lemma}\label{TC-invariance} The $v$-Ding functional is $\T_{\C}$-invariant, i.e. satisfies 
\[{\bf D}_v(\sigma^*(\omega)) = {\bf D}_v(\omega) \qquad \forall \sigma \in \T_{\C},\] iff the $v$-Futaki invariant $\Fut_v \equiv 0$.
\end{lemma}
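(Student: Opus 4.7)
The plan is to compute the variation of $\mathbf{D}_v$ along the orbits of $\T_{\C}$ and show that this variation is exactly controlled by $\Fut_v$. Since $\T$ itself fixes every element of $\mathcal{K}^{\T}_\alpha(X)$ and $\T_{\C} = \T \cdot \exp(J\tor)$, the invariance of $\mathbf{D}_v$ under $\T_{\C}$ is equivalent to its invariance under all the real one-parameter subgroups $\sigma_t^\xi := \exp(tJ\xi)$ with $\xi \in \tor$.

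First I would rewrite the infinitesimal action of $\sigma_t^\xi$ at the level of K\"ahler potentials. Using the relation $\omega(\xi,\cdot) = -d\mu_\omega^\xi$ from the definition of the canonically normalized momentum map together with the K\"ahler identities, a direct Cartan calculus computation gives $\mathcal{L}_{J\xi}\omega = dd^c\mu_\omega^\xi$. Consequently, if $\omega = \omega_\varphi \in \mathcal{K}^{\T}_\alpha(X)$, then $(\sigma_t^\xi)^*\omega_\varphi = \omega_{\varphi_t}$ with $\dot\varphi_0 = \mu_{\omega_\varphi}^\xi$.

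Next I would apply the differential formula \eqref{d-Dv} with $\dot\varphi = \mu_{\omega_\varphi}^\xi$:
\begin{equation*}
\frac{d}{dt}\Big|_{t=0} \mathbf{D}_v\big((\sigma_t^\xi)^*\omega_\varphi\big)
= \int_X \mu_{\omega_\varphi}^\xi\left(\frac{e^{\mathring{h}_{\omega_\varphi}}}{\vol(X)} - \frac{v(\mu_{\omega_\varphi})}{\vol_v(X)}\right)\omega_\varphi^{[n]}.
\end{equation*}
The canonical normalization \eqref{normalized-mu} in Remark~\ref{r:canonical_normalization}, applied with $h_\omega = \mathring{h}_{\omega_\varphi}$, forces the first integral to vanish. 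The second integral equals, by \eqref{DH} and the definition \eqref{v-Futaki} of $\Fut_v$, precisely $\Fut_v(\xi)/\vol_v(X)$. Thus
\begin{equation*}
\frac{d}{dt}\Big|_{t=0} \mathbf{D}_v\big((\sigma_t^\xi)^*\omega_\varphi\big) = -\frac{\Fut_v(\xi)}{\vol_v(X)}.
\end{equation*}

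To conclude, I would observe that the right-hand side is independent of the base K\"ahler metric $\omega_\varphi$, since $d\mu_{\rm DH}$ and $\vol_v(X)$ are determined only by $(X,\T,\alpha)$. Applying the computation at the metric $(\sigma_t^\xi)^*\omega_\varphi$ in place of $\omega_\varphi$ and using the group law $\sigma_{t+s}^\xi = \sigma_s^\xi \circ \sigma_t^\xi$, the map $t \mapsto \mathbf{D}_v\big((\sigma_t^\xi)^*\omega_\varphi\big)$ is affine with slope $-\Fut_v(\xi)/\vol_v(X)$. Therefore $\mathbf{D}_v$ is constant along every one-parameter subgroup generated by $J\tor$ if and only if $\Fut_v(\xi)=0$ for every $\xi\in\tor$, which is the claim. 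The only delicate point in the argument is fixing the signs and normalizations so that the $e^{\mathring{h}_\omega}$-term in \eqref{d-Dv} vanishes against the momentum map in the precise canonical normalization used by the paper; everything else is routine.
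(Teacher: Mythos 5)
Your proposal is correct and follows essentially the same route as the paper: differentiate $\mathbf{D}_v$ along the flows generated by $J\tor$ using \eqref{d-Dv}, kill the $e^{\mathring{h}_\omega}$-term via the canonical normalization \eqref{normalized-mu}, and identify the remaining term with $\Fut_v$ (up to the factor $\vol_v(X)$ and a sign convention for the flow, neither of which affects the equivalence). The only difference is that you spell out the ``standard argument'' the paper leaves implicit, namely that the slope is metric-independent so the restriction of $\mathbf{D}_v$ to each orbit is affine.
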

\begin{proof} ${\bf D}_v$ is clearly $\T$ invariant. For any $\zeta \in \tor$, we consider the flow of $-J\zeta$ $\sigma_t\in \T_{\C}$,   and take derivative at $t=0$ of ${\bf D}_v(\sigma_t^*(\omega))$. By \eqref{d-Dv} 
\[\frac{d}{dt}_{|_{t=0}}{\bf D}_v(\sigma_t^*(\omega))=\int_X \mu_{\omega}^{\zeta} \left( \frac{e^{\mathring{h}_{\omega}}}{\vol(X)}-\frac{v(\mu_{\omega})}{\vol_{v}(X)}\right)\omega^{[n]}= -\Fut_v(\zeta),\]
where we have used 
\eqref{normalized-mu} for the canonically normalized momentum map $\mu_{\omega}$. The claim follows from the above by a standard argument.
\end{proof}
We end-up this section with stating one of the main results of \cite{HL} (see Theorem 1.6), which gives an anlaytic criterion for the existence of a $v$-soliton on $(X, \T)$ in terms of ${\bf D}_v$. Recall the definition~\cite{Aubin} of the Aubin  functional ${\bf J} : \mathcal{H}^{\T}_{\omega_0}(X) \to \R$:
\[ {\bf J}(\varphi) := \int_X \varphi \omega_0^{[n]}-{\bf I}_1(\varphi). \]
The functional ${\bf J}$ descends to ${\mathcal H}^{\T}(X)/\R$ (so we denote it by ${\bf J}(\omega_{\varphi})$), and has the property that ${\bf J}(\omega_{\varphi})\ge 0$ with ${\bf J}(\omega_{\phi})=0$ iff $\omega_{\varphi}=\omega_0$. 
\begin{thm}\cite{HL}\label{thm:HL} Let $X$ be a Fano manifold,  $\T\subset\Aut_0(X)$ a maximal compact torus with canonical momentum polytope $\Pol_X\subset \tor^*$ and $v$ a positive smooth function on $\Pol_X$. Then $X$ admits a $\T$-invariant $v$-soliton in $2\pi c_1(X)$ if and only if the $v$-weighted Ding functional ${\bf D}_v$ is invariant and coercive with respect to the complex torus $\T_{\C}$, i.e. there exist constants $\Lambda>0$ and  $C$ such that for any  $\omega \in {\mathcal K}^{\T}_\alpha(X)$, 
\[{\bf D}_{v}(\omega) \ge \Lambda \inf_{\sigma \in \T_{\C}} {\bf J}(\sigma^*(\omega)) - C. \]
\end{thm}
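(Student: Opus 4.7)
The plan is to mimic the variational approach to the Yau--Tian--Donaldson conjecture for K\"ahler--Einstein metrics of Berman--Boucksom--Jonsson~\cite{BBJ21}, extending it from $v\equiv 1$ to an arbitrary positive smooth weight on $\Pol_X$. The two implications are handled by different techniques, but both ultimately rest on the geodesic convexity of ${\bf D}_v$ along the Mabuchi space of K\"ahler potentials.

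For existence $\Rightarrow$ coercivity, the key step is to establish that along any $C^{1,1}$-geodesic $\varphi_t$ in ${\mathcal H}^{\T}_{\omega_0}(X)$, the function $t\mapsto{\bf D}_v(\varphi_t)$ is convex, and strictly convex in directions transverse to $\T_{\C}$-orbits. The entropy-like second term in Definition~\ref{d-v-ding} is convex by Berndtsson's plurisubharmonicity theorem applied to the family $e^{\mathring{h}_{\omega_0}-2\varphi_t}\omega_0^{[n]}$, while the ${\bf I}_v$ term has a second derivative that can be computed from \eqref{I_v} using the variation of the weighted Monge--Amp\`ere measure $v(\mu_{\omega})\omega^{[n]}$ along the geodesic. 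Granted convexity, a $v$-soliton minimizes ${\bf D}_v$ by \eqref{d-Dv}; combined with Lemma~\ref{TC-invariance} and a Darvas--Rubinstein principle for properness modulo a group action, strict convexity transverse to $\T_{\C}$-orbits upgrades the minimization to the coercive bound in terms of $\inf_{\sigma\in\T_{\C}}{\bf J}(\sigma^*\omega)$.

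For coercivity $\Rightarrow$ existence, I would run a variational argument in the space ${\mathcal E}^{1,\T}(X,\omega_0)$ of $\T$-invariant finite-energy K\"ahler potentials. After extending ${\bf D}_v$ to ${\mathcal E}^{1,\T}$ by a standard continuity argument and proving $d_1$-lower semicontinuity, one takes a minimizing sequence $\varphi_j$ and replaces each $\varphi_j$ by $\sigma_j^*\varphi_j$ for some $\sigma_j\in\T_{\C}$ approximately realizing the infimum of ${\bf J}$ on the $\T_{\C}$-orbit, and invokes coercivity to keep the normalized sequence in a $d_1$-bounded set. Darvas' compactness of $d_1$-balls then yields a $d_1$-limit $\varphi_*\in{\mathcal E}^{1,\T}$, and the Euler--Lagrange identity \eqref{d-Dv} forces $\omega_{\varphi_*}$ to solve the weak $v$-soliton equation $e^{\mathring{h}_{\omega_{\varphi_*}}}=\vol(X)\mathring{v}(\mu_{\omega_{\varphi_*}})$.

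The main obstacle I expect is smoothness of the minimizer. The $v$-soliton equation is a coupled weighted complex Monge--Amp\`ere equation whose right-hand side depends on the unknown through $\mu_{\omega_\varphi}$, so one cannot directly invoke the Chen--Cheng estimates~\cite{CC} as a black box. The strategy is to approximate $v$ by weights for which a smooth solution is produced by a continuity method, treating the coupling through the momentum map perturbatively, and then to pass to the limit using uniform $L^\infty$ and $C^{1,1}$ a priori estimates obtained by adapting the Tian--Zhu estimates for KRS~\cite{TZ1} (where $v=e^{\langle\tau,\cdot\rangle}$ factorizes multiplicatively) to an arbitrary positive smooth weight; once $C^0$ and $C^{1,1}$ bounds are secured, higher regularity follows by standard elliptic bootstrapping.
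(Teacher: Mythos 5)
A preliminary remark on the comparison you are being measured against: the paper does not prove this theorem at all. It is quoted from Han--Li \cite{HL} (their Theorem~6), so the only fair comparison is with the strategy of that source, which is indeed the variational one you outline (convexity of ${\bf D}_v$ along geodesics plus the Darvas--Rubinstein properness principle \cite{DR} for one direction, direct minimization in finite energy for the other, in the spirit of \cite{BBJ21}). At that level your sketch is on target, and one of your vague points can even be sharpened in your favour: the ${\bf I}_v$-term is not merely convex but exactly \emph{affine} along geodesics --- the cancellation $\ddot\varphi-|d\dot\varphi|^2_{\omega_\varphi}=0$ that linearizes the unweighted Monge--Amp\`ere energy survives the weight because $\tfrac{d}{dt}v(\mu_{\omega_{\varphi_t}})=\langle d(v(\mu_{\omega_{\varphi_t}})),d\dot\varphi_t\rangle_{\omega_{\varphi_t}}$ (cf.\ \eqref{v-variation}) and the extra terms cancel after integration by parts. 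This matters: all convexity of ${\bf D}_v$ then sits in the Berndtsson term, whose affine degenerations are classified by holomorphic vector fields, which is precisely the input the Darvas--Rubinstein axioms require.

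There are, however, two genuine gaps in the coercivity $\Rightarrow$ existence direction. First, $d_1$-balls are \emph{not} $d_1$-compact, so your extraction of a limit $\varphi_*$ does not go through as written; the correct compactness input is either weak $L^1$-compactness of sup-normalized $\omega_0$-psh functions combined with $d_1$-lower semicontinuity of $-{\bf I}_v$ and the continuity of $\varphi\mapsto\log\int_Xe^{\mathring{h}_{\omega_0}-2\varphi}\omega_0^{[n]}$ along such sequences (Demailly--Koll\'ar uniform integrability), or the entropy-compactness theorem of \cite{BBEGZ}. Second, and more seriously, the regularity of the minimizer is the crux of the whole theorem and your plan to ``adapt the Tian--Zhu estimates'' is unlikely to work for a general weight: those estimates exploit the multiplicative structure $v=e^{\langle\tau,x\rangle}$ (the modified Ricci potential identities) and do not survive for arbitrary positive $v$. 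The mechanism actually available in general is that for $\T$-invariant finite-energy potentials the momentum image stays in the fixed compact polytope $\Pol_X$, so $v(\mu_{\omega_\varphi})$ is uniformly pinched between positive constants and the Euler--Lagrange equation becomes a complex Monge--Amp\`ere equation with bounded density; Ko{\l}odziej then gives $L^\infty$ and H\"older control, and the remaining bootstrap must cope with the dependence of the right-hand side on $\varphi$ through $\mu_{\omega_\varphi}$, i.e.\ through first derivatives of $\varphi$ along the torus directions. This is where \cite{HL} invest substantial work; you should either reproduce that argument or cite it, since the perturbative scheme you describe is a plan, not a proof.
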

Versions of the above theorem have been known for KRS by the works of Cao--Tian--Zhu (see~\cite[Theorems 0.1 and 0.2]{CTZ}) and Darvas--Rubinstein (see \cite[Theorem 2.11]{DR}).
\section{$v$-solitons as weighted constant scalar curvature metrics} In \cite[Prop.1]{AJL}, it is observed that any $v$-soliton $\omega \in 2\pi c_1(X)$ can be equivalently described as a $\T$-invariant K\"ahler metric in $2\pi c_1(X)$ with $(v, \tilde v)$-constant scalar curvature in sense of Lahdili~\cite{lahdili}, where  the smooth function ${\tilde v}(x)$ on $\Pol_X$  is defined by
\begin{equation}\label{tilde-v}
{\tilde v}(x): = 2\left(n + \sum_{i=1}^{\ell}(\log v)_{, i} x_i\right)v(x).\end{equation}

To recall the construction in \cite{AJL}, we consider the notion of  a $(v, w)$-cscK metric~\cite{lahdili}, introduced by the equation
\begin{equation}\label{v-scal}
\Scal_v(\omega):= v(\mu_{\omega}) \Scal(\omega) + 2\Delta_{\omega} v(\mu_{\omega})  + \big\langle g_{\omega}, \mu_{\omega}^*\left(\Hess(v)\right)\big\rangle = w(\mu_{\omega}), \end{equation}
where $g_{\omega}$  is the riemannian metric associated to $\omega$ and   the contraction $\langle \cdot, \cdot \rangle$  is taken between the smooth $\tor^*\otimes \tor^*$-valued function $g_{\omega}$ on $X$  (the restriction of the riemannian metric $g_{\omega}$  to $\tor \subset C^{\infty}(X, TX)$)  and the smooth $\tor\otimes \tor$-valued function ${\mu_{\omega}}^*\left(\Hess(v)\right)$ on $X$ (given by the pull-back by $\mu_{\omega}$ of $\Hess(v) \in C^{\infty}(\Pol_X, \tor\otimes \tor)$); equivalently,  if $\{\xi_i\}_{i=1, \ldots \ell}$ is a basis of $\tor$, we have
\[ \big\langle g_{\omega}, \mu_{\omega}^*\left(\Hess(v)\right)\big\rangle = \sum_{i,j=1}^{\ell}v_{,ij}(\mu_{\omega})g_{\omega}(\xi_i, \xi_j).\]
In these terms, we have 
\begin{prop}\cite{AJL}\label{p:weights}  $\omega \in \mathcal{K}_{2\pi c_1}^{\T}(X)$ is a $v$-soliton if and only if it is a $(v, \tilde v)$-cscK metric, i.e. satisfies \eqref{v-scal} with $w=\tilde v$.
\end{prop}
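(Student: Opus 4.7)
The approach is to show that the $(v,\tilde v)$-cscK equation is exactly the $\omega$-trace of the $v$-soliton $(1,1)$-form equation, read as a scalar relation, and to use the canonical-normalization identity for moment maps on a Fano manifold to match the terms produced. Compactness of $X$ and the maximum principle then supply the reverse direction.

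For the forward direction, I would take $\trace_\omega$ of $\Ric(\omega)-\omega = \frac{1}{2}dd^c\log v(\mu_\omega)$. Using $\Scal(\omega) = 2\,\trace_\omega\Ric(\omega)$ and $\trace_\omega dd^c f = -\Delta_\omega f$, this yields $\Scal(\omega)-2n = -\Delta_\omega\log v(\mu_\omega)$. Multiplying by $v(\mu_\omega)$ and using the pointwise product-rule identity
\[
v\,dd^c\log v = dd^c v - v^{-1}\,dv\wedge d^c v
\]
(applied to $v(\mu_\omega)$), one expresses the result as a combination of $\Delta_\omega v(\mu_\omega)$ and a $g_\omega$-gradient-squared term. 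The chain rule for the Laplacian, together with the elementary fact that $g_\omega(\nabla\mu_\omega^{\xi_i},\nabla\mu_\omega^{\xi_j}) = g_\omega(\xi_i,\xi_j)$ (coming from $\omega(\xi,\cdot) = -d\mu_\omega^\xi$ and Kähler compatibility), naturally produces both the term $2\Delta_\omega v(\mu_\omega)$ and the Hessian pairing $\langle g_\omega, \mu_\omega^*\Hess(v)\rangle$ appearing in \eqref{v-scal}, plus a residual term of the form $\sum_i v_{,i}(\mu_\omega)\,\Delta_\omega \mu_\omega^{\xi_i}$.

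The main obstacle is matching this residual with the affine-linear summand $2\sum_i v_{,i}(\mu_\omega)\,x_i$ of $\tilde v$. This is where the Fano-specific canonical-normalization identity enters:
\[
\Delta_\omega\mu_\omega^\xi - 2\mu_\omega^\xi \;=\; g_\omega\!\big(\nabla\mu_\omega^\xi,\nabla h_\omega\big),\qquad \xi\in\tor,
\]
which holds on every $\omega \in \mathcal{K}_\alpha^{\T}(X)$ with its canonically normalized moment map and Ricci potential (it reduces on a Kähler--Einstein metric to the classical Matsushima--Lichnerowicz eigenfunction equation $\Delta_\omega\mu_\omega^\xi = 2\mu_\omega^\xi$, and is obtained in general by contracting the Ricci-potential equation $\Ric(\omega)-\omega = \frac{1}{2}dd^c h_\omega$ with $J\xi$ and using $\cL_{J\xi}\omega = dd^c\mu_\omega^\xi$). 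On a $v$-soliton, $h_\omega = \log v(\mu_\omega)$ up to an additive constant, so $\nabla h_\omega = v(\mu_\omega)^{-1}\nabla v(\mu_\omega)$ and the resulting gradient term precisely cancels the $v(\mu_\omega)^{-1}|dv(\mu_\omega)|^2_{g_\omega}$ contribution from the product rule, leaving exactly $\tilde v(\mu_\omega) = 2nv(\mu_\omega) + 2\sum_i v_{,i}(\mu_\omega)\,x_i$ on the right-hand side.

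For the reverse implication, the computation above is reversible; equivalently, the $(v,\tilde v)$-cscK equation is algebraically equivalent (via the same Fano identity) to the scalar relation $\Delta_\omega h_\omega = \Delta_\omega\log v(\mu_\omega)$ on $X$. Since $X$ is compact, the maximum principle forces $h_\omega - \log v(\mu_\omega)$ to be a constant, and hence $\Ric(\omega)-\omega = \frac{1}{2}dd^c h_\omega = \frac{1}{2}dd^c\log v(\mu_\omega)$, i.e.\ $\omega$ is a $v$-soliton (the constant is harmless because the $v$-soliton equation is invariant under rescaling $v$ by a positive constant).
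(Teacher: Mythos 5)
Your argument is correct and is essentially the computation behind \cite[Prop.~1]{AJL}, which the present paper cites without reproving: trace the soliton equation, use $\langle d\mu_\omega^{\xi_i},d\mu_\omega^{\xi_j}\rangle_\omega=g_\omega(\xi_i,\xi_j)$ together with the chain rule for $\Delta_\omega(v\circ\mu_\omega)$, and invoke the normalization identity $\Delta_\omega\mu_\omega^\xi-2\mu_\omega^\xi=\langle dh_\omega,d\mu_\omega^\xi\rangle_\omega$, which (as you note) holds for \emph{every} $\omega\in\mathcal{K}^{\T}_\alpha(X)$ with its canonically normalized momentum map and is exactly what makes the computation reversible. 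The only imprecision is in the converse: the $(1,\tilde v)$-side reduces not to $\Delta_\omega u=0$ for $u=h_\omega-\log v(\mu_\omega)$ but to the drift equation $v(\mu_\omega)\,\Delta_\omega u=\langle du,d(v(\mu_\omega))\rangle_\omega$; since this is elliptic with no zeroth-order term, the strong maximum principle still forces $u$ to be constant, so your conclusion stands.
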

Notice that when $v=1$, we recover the basic fact that the K\"ahler--Einstein metrics in $2\pi c_1(X)$ are the K\"ahler metrics of constant scalar curvature equal to $2n$.

\smallskip
The advantage of this point of view is that it leads to a weighted version of  the Mabuchi energy, which we shall denote by  ${\bf M}_{v, w}(\omega_{\varphi})$,  defined on $\mathcal{H}_{\omega_0}^{\T}(X)/\R$ by (see \cite{lahdili}):
\[ (d_{\omega_\varphi} {\bf M}_{v, w})(\dot \varphi):= -\int_X\dot{\varphi} \left(\Scal_v(\omega_{\varphi})-w(\mu_{\varphi})\right)\omega_\varphi^{[n]}, \qquad {\bf M}_{v, w}(\omega_0)=0.\]
The  critical points of ${\bf M}_{v,w}$ are the $\T$-invariant $(v,w)$-cscK metrics in a given K\"ahler class $\alpha=[\omega_0]$, and  by Proposition~\ref{p:weights}, in the special case when $\alpha = 2\pi c_1(X)$ and $w=\tilde v$, the critical points of ${\bf M}_{v, \tilde v}$ are the $v$-solitons.  We also have  a Futaki type invariant obstructing the existence of a  $(v, w)$-cscK metric  (see \cite{lahdili}):
\[ \Fut_{v, w} : \tor \to \R, \qquad  \Fut_{v, w}(\zeta): =  - \int_X \mu_{\omega}^{\zeta} \left(\Scal_v(\omega)-w(\mu_{\omega})\right) \omega^{[n]}, \]
which is independent of $\omega \in \mathcal{K}_{\alpha}^{\T}(X)$ and thus vanishes should a $(v,w)$-cscK metric exist in $\alpha$.  It is clear from the definitions that ${\bf M}_{v, w}$  is invariant under the natural action of $\T_{\C}$ on ${\mathcal K}_{\alpha}^{\T}(X)$ by pull-backs  if and only if $\Fut_{v,w} =0$; furthermore
\begin{thm}\cite{AJL} If  $(X, \T, \alpha)$ admits a $(v,w)$-cscK metric, then ${\bf M}_{v,w}$ is coercive relative to $\T_\C$, i.e. ${\bf M}_{v,w}$ is invariant under the action of $\T_{\C}$ on ${\mathcal K}_{\alpha}^{\T}(X)$ and there exist constants $\Lambda>0, C$ such that for any $\omega \in {\mathcal K}_{\alpha}^{\T}(X)$
\[ {\bf M}_{v,w}(\omega) \ge \Lambda \inf_{\sigma\in \T_{\C}}{\bf J}(\sigma^*(\omega)) - C.\] \end{thm}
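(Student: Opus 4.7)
The statement asserts two properties of ${\bf M}_{v,w}$: its $\T_\C$-invariance and its coercivity modulo $\T_\C$. The plan is to handle the invariance by a direct computation generalizing Lemma~\ref{TC-invariance}, and then to adapt the Chen--Cheng strategy (extended to the K\"ahler--Ricci soliton case by Darvas--Rubinstein and to the weighted setting by Lahdili) in order to establish coercivity.

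For $\T_\C$-invariance, ${\bf M}_{v,w}$ is $\T$-invariant by construction, so it suffices to check invariance along the flow $\sigma_t \in \T_\C$ of $-J\zeta$ for each $\zeta \in \tor$. Computing $\frac{d}{dt}\big|_{t=0} {\bf M}_{v,w}(\sigma_t^*\omega)$ from the definition and using the $\T$-equivariance of the moment map yields, up to sign, the weighted Futaki invariant $\Fut_{v,w}(\zeta)$. Since $\Fut_{v,w}$ is an $\omega$-independent invariant of $(X,\T,\alpha)$, and the existence of a $(v,w)$-cscK metric forces $\Fut_{v,w}\equiv 0$, the functional ${\bf M}_{v,w}$ descends to $\mathcal{K}_\alpha^\T(X)/\T_\C$.

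For the coercivity, I would rewrite the $(v,w)$-cscK equation \eqref{v-scal} as a coupled system of a weighted complex Monge--Amp\`ere equation $v(\mu_{\omega_\varphi})\omega_\varphi^{[n]} = e^F \omega_0^{[n]}$ for $\varphi$ together with a linear PDE for the auxiliary function $F$ involving the weighted Laplacian encoding $\Scal_v(\omega_\varphi)$. The key analytic step is to establish uniform a priori $C^0$, $C^2$, and higher estimates for this weighted Monge--Amp\`ere equation under entropy bounds on the right-hand side, in the spirit of Chen--Cheng. With these estimates in hand, and using geodesic convexity of ${\bf M}_{v,w}$ along weak $C^{1,1}$ geodesics in $\mathcal{K}_\alpha^\T(X)$ as developed in Lahdili's weighted formalism, one can run a contradiction/compactness argument: a sequence $\omega_j$ violating the coercivity inequality would, after $\T_\C$-normalization, subconverge to a weak geodesic ray along which ${\bf M}_{v,w}$ has non-positive asymptotic slope. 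This contradicts the existence of a $(v,w)$-cscK metric, via the expected uniqueness of such metrics modulo $\T_\C$ in the weighted setting.

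The main obstacle I expect is the weighted a priori estimates of step (3). The Chen--Cheng machinery rests on delicate integration-by-parts identities and pluripotential-theoretic tools, and in the weighted setting one must track extra terms involving $v(\mu_\omega)$, its gradient, and its Hessian coupled through the moment map, as well as the implicit dependence of $v(\mu_{\omega_\varphi})$ on $\varphi$. Fortunately, since $v, w \in C^\infty(\Pol_X)$ with $\Pol_X$ compact and $v>0$, all these extra terms are uniformly bounded by constants depending only on $(v,w,\alpha)$, so the Chen--Cheng scheme carries over with technical but tractable modifications. The positivity and compactness of $\Pol_X$ are used essentially at several points, both in closing the estimates and in extracting limits.
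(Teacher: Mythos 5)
First, note that the paper does not prove this statement at all: it is quoted from \cite{AJL}, so your proposal has to be measured against the argument given there. At the level of strategy you have reconstructed it correctly: the $\T_{\C}$-invariance is exactly the vanishing of $\Fut_{v,w}$ forced by the existence of a $(v,w)$-cscK metric (the same computation as in Lemma~\ref{TC-invariance}), and the coercivity in \cite{AJL} is indeed obtained by feeding a weighted extension of the Chen--Cheng/He a priori estimates into the Darvas--Rubinstein properness principle, using convexity of ${\bf M}_{v,w}$ along finite-energy geodesics together with regularity and uniqueness (modulo $\T_{\C}$) of its minimizers.

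That said, the two steps you defer are where the entire content of the theorem lies, and one of your supporting claims is wrong as stated. You assert that the extra weighted terms are ``uniformly bounded by constants depending only on $(v,w,\alpha)$'' because $v,w$ are smooth on the compact polytope $\Pol_X$. This fails for the second-order term $\big\langle g_{\omega_\varphi}, \mu_{\omega_\varphi}^*\left(\Hess(v)\right)\big\rangle=\sum_{i,j}v_{,ij}(\mu_{\omega_\varphi})\,g_{\omega_\varphi}(\xi_i,\xi_j)$: only the coefficients $v_{,ij}(\mu_{\omega_\varphi})$ are a priori bounded, whereas $g_{\omega_\varphi}(\xi_i,\xi_j)=\langle d\mu_{\omega_\varphi}^{\xi_i},d\mu_{\omega_\varphi}^{\xi_j}\rangle_{\omega_\varphi}$ depends on the unknown metric and is only controlled in terms of $\mathrm{tr}_{\omega_\varphi}\omega_0$, i.e.\ by the very quantity the Chen--Cheng second-order estimate is trying to bound. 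Absorbing this coupling (and the implicit dependence of $\mu_{\omega_\varphi}$ on $\varphi$) is precisely the nontrivial part of the weighted estimates; it is not a bounded perturbation of the unweighted scheme. Likewise, the uniqueness of $(v,w)$-cscK metrics modulo $\T_{\C}$, which you invoke as ``expected,'' is itself a substantial theorem (proved via Berndtsson-type convexity in the weighted setting) that is needed, together with the regularity theory for finite-energy minimizers of ${\bf M}_{v,w}$, to close the Darvas--Rubinstein contradiction argument. So the outline is faithful to \cite{AJL}, but as a proof it assumes exactly the points that make the theorem hard.
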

In  \cite[Thm.~5]{lahdili}, it is shown that ${\bf M}_{v, w}$ admits a Chen--Tian decomposition  as the sum of  energy and entropy parts:
\begin{align}
\begin{split}\label{Chen-Tian}
{\bf M}_{v, w}(\omega_\varphi)=\int_X\log\left(\frac{v(\mu_\varphi)\omega^{[n]}_\varphi}{\omega_0^{[n]}}\right) v(\mu_\varphi)\omega^{[n]}_\varphi &-2{\bf I}^{\Ric(\omega_0)}_{v}(\varphi)+{\bf I}_w(\varphi) \\
&- \int_X \log(v(\mu_0))v(\mu_0)\omega_0^{[n]},
\end{split}
\end{align}
where the functional ${\bf I}_w: \mathcal{H}_{\omega_0}^{\T}(X) \to\R$ is defined in \eqref{I_v}
and for a fixed $\T$-invariant closed $(1,1)$-form $\rho$ on $X$ with momentum $\mu_\rho: X\to\tor^{*}$,  the functional ${\bf I}^{\rho}_{v}: \mathcal{H}_{\omega_0}^{\T}(X)  \to\R$ is defined by
\begin{equation*}\label{Energi-chi}
(d_\varphi{\bf I}^{\rho}_{v})(\dot{\varphi}):=\int_X \dot{\varphi}\left(v(\mu_\varphi)\rho\wedge\omega_{\varphi}^{[n-1]}+\langle (dv)(\mu_\varphi),\mu_\rho\rangle\omega_{\varphi}^{[n]} \right),\quad {\bf I}^{\rho}_{v}(0)=0.
\end{equation*}
In \cite{HL} (see Definition 2.10 and  the second identity in the proof of 
Lemma 2.12 in that reference), the authors introduce a weighted version of the Mabuchi functional whose critical points are $v$-solitons. Seeing $v$-solitons as $(v, \tilde v)$-cscK metrics, it is not immediately clear that Han--Li's weighted Mabuchi functional equals 
 ${\bf M}_{v, \tilde v}$,  up to an additive constant. We check this in the lemma below, where in our notation the first line of \eqref{eq:Mvtv}  corresponds to Han-Li's weighted Mabuchi functional.
\begin{lemma}\label{Mab-Fano}
On a Fano manifold $(X,\T,\alpha=2\pi c_1(X))$,  the weighted Mabuchi energy ${\bf M}_{v, \tilde v}$ corresponding  to $v$-solitons has the following equivalent expression
\begin{equation}
\begin{split}\label{eq:Mvtv}
{\bf M}_{v, \tilde v}(\omega_\varphi)=&\int_X\log\left(\frac{v(\mu_\varphi)\omega^{[n]}_\varphi}{e^{h_0}\omega_0^{[n]}}\right) v(\mu_\varphi)\omega^{[n]}_\varphi +2\left(\int_X\varphi v(\mu_\varphi)\omega_\varphi^{[n]} - {\bf I}_{v}(\varphi)\right) \\
&- \int_X \log(e^{-h_0}v(\mu_0))v(\mu_0)\omega_0^{[n]},
\end{split}
\end{equation}
where $h_0$ is a Ricci potential of $\omega_0$.
\end{lemma}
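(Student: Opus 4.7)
The plan is to massage the Chen--Tian decomposition \eqref{Chen-Tian} of ${\bf M}_{v,w}$, specialized at $w=\tilde v$, into the desired form \eqref{eq:Mvtv}, exploiting the Fano identity $\Ric(\omega_0)=\omega_0+\tfrac{1}{2}dd^c h_0$, the $\R$-linearity of ${\bf I}^{\rho}_v(\varphi)$ in $\rho$, and the explicit shape \eqref{tilde-v} of $\tilde v$.

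First I would split off $h_0$ from the two entropy integrals of \eqref{Chen-Tian} via the elementary identity $\log(\cdot/\omega_0^{[n]})=\log(\cdot/(e^{h_0}\omega_0^{[n]}))+h_0$. This immediately produces the two entropy terms appearing on the first line of \eqref{eq:Mvtv}, together with an extra ``Ricci-potential'' contribution $R(\varphi):=\int_X h_0\,v(\mu_\varphi)\omega_\varphi^{[n]}-\int_X h_0\,v(\mu_0)\omega_0^{[n]}$. The lemma then reduces to verifying the auxiliary identity
\[
R(\varphi)-2{\bf I}^{\Ric(\omega_0)}_{v}(\varphi)+{\bf I}_{\tilde v}(\varphi)=2\int_X\varphi\,v(\mu_\varphi)\omega_\varphi^{[n]}-2{\bf I}_v(\varphi).
\]
Since both sides vanish at $\varphi=0$, it suffices to compare their differentials along any variation $\dot\varphi$. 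I would use linearity to decompose $2{\bf I}^{\Ric(\omega_0)}_v=2{\bf I}^{\omega_0}_v+{\bf I}^{dd^c h_0}_v$; an integration by parts in the $dd^c$-piece arranges the differential of ${\bf I}^{dd^c h_0}_v$ to cancel against that of $R$, while the remaining contribution of $2{\bf I}^{\omega_0}_v$ combines with ${\bf I}_{\tilde v}$ (using the explicit expression $\tilde v=2(nv+\langle dv,x\rangle)$ from \eqref{tilde-v}) and the variational formula $d\mu_{\varphi}^{\zeta}(\dot\varphi)=(J\zeta)\dot\varphi$ for the canonically normalized momentum map to reproduce the differential of the two linear-in-$\varphi$ terms on the right-hand side.

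The main obstacle is the bookkeeping of the momentum-map normalizations. The canonical lift to $K_X$ pins down both $\mu_{\omega_0}$ and $\mu_{\Ric(\omega_0)}$, and the decomposition $\Ric(\omega_0)=\omega_0+\tfrac{1}{2}dd^c h_0$ forces a compatible normalization $\mu^{\zeta}_{dd^c h_0}=(J\zeta)h_0+c^\zeta$; one must show that the constants $c^\zeta$ are absorbed by \eqref{normalized-mu}, so that the $\langle dv(\mu_\varphi),\mu_\rho\rangle$ contribution appearing in each ${\bf I}^\rho_v$ correctly matches the $\langle dv,x\rangle$ piece of $\tilde v$. Once this is pinned down, all remaining cross-terms assemble into a total derivative on the closed manifold $X$, and the identity follows from Stokes' theorem.
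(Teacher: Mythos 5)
Your proposal is correct and follows essentially the same route as the paper: the paper also starts from the Chen--Tian decomposition \eqref{Chen-Tian}, computes $d_\varphi{\bf I}^{\Ric(\omega_0)}_v-d_\varphi{\bf I}^{\omega_0}_v$ (your ${\bf I}^{dd^ch_0}_v$ piece) and shows it is exactly the differential of your $R(\varphi)$, and then establishes ${\bf I}^{\omega_0}_{v}={\bf I}_v+\tfrac{1}{2}{\bf I}_{\tilde v}-\int_X\varphi\, v(\mu_\varphi)\omega_\varphi^{[n]}$ by the same integration by parts using $\mu^{\zeta}_0=\mu^{\zeta}_\varphi+\mathcal{L}_{J\zeta}\varphi$ and the form \eqref{tilde-v} of $\tilde v$; your auxiliary identity is precisely the combination of the paper's \eqref{I_Fano1} and \eqref{I_Fano2}. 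The only caveat is a sign convention: with the paper's conventions one has $d_\varphi\mu^{\zeta}_{\varphi}(\dot\varphi)=-(J\zeta)\dot\varphi$ and $\mu^{\zeta}_{\Ric(\omega_0)}=\mu^{\zeta}_0-\tfrac12\mathcal{L}_{J\zeta}h_0$ (no extra constants to absorb), but this does not affect the argument.
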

\begin{proof}
To simplify notation, we index by $\varphi$ the geometric quantities depending on the K\"ahler structure $\omega_{\varphi}$. We fix a basis of $\mathbb{S}^{1}$ generators $\{\xi_j\}_{j=1,\cdots,\ell}$ of $\tor$.  Using the relations $\Ric(\omega_{0})=\omega_0+\frac{1}{2}dd^{c}h_0$,  we have that the momentum map with respect to $\Ric(\omega_0)$ is 
\[\mu_{\Ric(\omega_{0})}^{\zeta}= \mu^{\zeta}_{0}-\frac{1}{2}\mathcal{L}_{J\zeta}h_0, \qquad \zeta\in\tor.\]
We further normalize  $h_0$ by $\int_X e^{h_{0}} \omega_{0}^{[n]} =1$  and compute
\[
\begin{split}
(d_\varphi{\bf I}^{\Ric(\omega_0}_{v})(\dot{\varphi})=&\int_X \dot{\varphi}\Big(v(\mu_\varphi)\Ric(\omega_0)\wedge\omega_{\varphi}^{[n-1]}+\sum_{j=1}^{\ell} v_{,j}(\mu_\varphi)\mu_{\Ric(\omega_0)}^{\xi_j}\omega_{\varphi}^{[n]} \Big)\\
=&\frac{1}{2}\int_X \dot{\varphi}\Big(-v(\mu_\varphi)\Delta_{\varphi}(h_0)\omega_{\varphi}^{[n]}-\sum_{j=1}^{\ell} v_{,j}(\mu_\varphi)(\mathcal{L}_{J\xi_j}h_0)\omega_{\varphi}^{[n]} \Big)
+(d_\varphi{\bf I}^{\omega_0}_{v})(\dot{\varphi})\\
=&\frac{1}{2}\int_X \dot{\varphi}\Big(-\langle d(v(\mu_\varphi)),dh_0\rangle_{\omega_\varphi} \omega_{\varphi}^{[n]}-\sum_{j=1}^{\ell} v_{,j}(\mu_\varphi)(\mathcal{L}_{J\xi_j}h_0)\omega_{\varphi}^{[n]} \Big)\\
&-\frac{1}{2}\int_X\langle dh_0,d\dot{\varphi}\rangle_\varphi v(\mu_\varphi)\omega_\varphi^{[n]}+(d_\varphi{\bf I}^{\omega_0}_{v})(\dot{\varphi})\\
=&\frac{1}{2}\int_X \dot{\varphi}\Big(-\sum_{j=1}^{\ell} v_{,j}(\mu_\varphi)\langle d\mu^{\xi_j}_\varphi,dh_0\rangle_{\omega_\varphi} \omega_{\varphi}^{[n]}-\sum_{j=1}^{\ell} v_{,j}(\mu_\varphi)(\mathcal{L}_{J\xi_j}h_0)\omega_{\varphi}^{[n]} \Big)\\
&-\frac{1}{2}\int_X\langle dh_0,d\dot{\varphi}\rangle_\varphi v(\mu_\varphi)\omega_\varphi^{[n]}+(d_\varphi{\bf I}^{\omega_0}_{v})(\dot{\varphi})\\
=&-\frac{1}{2}\int_X\langle dh_0,d\dot{\varphi}\rangle_\varphi v(\mu_\varphi)\omega_\varphi^{[n]}+(d_\varphi{\bf I}^{\omega_0}_{v})(\dot{\varphi}),
\end{split}
\]
where we used the identity $\langle d\mu^{\xi_j}_\varphi,dh_0\rangle_{\omega_\varphi}=-\mathcal{L}_{J\xi_j}h_0$.  On the other hand, noting that
\begin{equation}\label{v-variation} 
d_\varphi[v(\mu_{\varphi})](\dot{\varphi})=\sum_{i=1}^{\ell} v_{,i}(\mu_{\varphi})(d^{c}\dot\varphi)(\xi_i)=\langle d[ v(\mu_{\varphi})], d\dot\varphi\rangle_\varphi,
\end{equation}
we have
\[
\begin{split}
d_\varphi\left[\int_X h_0v(\mu_\varphi)\omega_\varphi^{[n]}\right]=&\int_X h_0\langle d[ v(\mu_{\varphi})], d\dot\varphi\rangle_\varphi\omega_\varphi^{[n]}-\int_X h_0v(\mu_\varphi)\Delta_\varphi(\dot{\varphi})\omega_\varphi^{[m]}\\
=&-\int_X\langle dh_0,d\dot{\varphi}\rangle_\varphi v(\mu_\varphi)\omega_\varphi^{[n]}
\end{split}
\]
and hence
\begin{equation}\label{I_Fano1}
{\bf I}^{\Ric(\omega_0)}_{v}(\varphi)={\bf I}^{\omega_0}_{v}(\varphi)+\frac{1}{2}\int_X h_0v(\mu_\varphi)\omega_\varphi^{[n]}-\frac{1}{2}\int_X h_0v(\mu_0)\omega_0^{[n]}.
\end{equation}
The last integral in the RHS arises from the normalization ${\bf I}_v^{\Ric(\omega_0)}(0)={\bf I}_v^{\omega_0}(0)=0$. Using  $\mu^{\zeta}_0=\mu^{\zeta}_\varphi+\mathcal{L}_{J\zeta}\varphi$ for $\zeta\in \tor$, we compute further
\[
\begin{split}
(d_\varphi{\bf I}^{\omega_0}_{v})(\dot{\varphi})=&\int_X \dot{\varphi}\Big(v(\mu_\varphi)\omega_0\wedge\omega_{\varphi}^{[n-1]}+\sum_{j=1}^{\ell} v_{,j}(\mu_\varphi)\mu_{\omega_0}^{\xi_j}\omega_{\varphi}^{[n]} \Big)\\
=&\int_X \dot{\varphi}\Big(v(\mu_\varphi)(\omega_\varphi-dd^{c}\varphi)\wedge\omega_{\varphi}^{[n-1]}+\sum_{j=1}^{\ell} v_{,j}(\mu_\varphi)(\mu_{\varphi}^{\xi_j}+\mathcal{L}_{J\xi_j}\varphi)\omega_{\varphi}^{[m]} \Big).
\end{split}\]
Integrating by parts further leads to
\[
\begin{split}
(d_\varphi{\bf I}^{\omega_0}_{v})(\dot{\varphi})=& n (d_\varphi{\bf I}_{v})(\dot{\varphi}) + \int_X \dot{\varphi}\langle d(v(\mu_\varphi)),d\varphi\rangle_\varphi\omega_{\varphi}^{[n]}+\int_X \langle d\dot{\varphi},d\varphi\rangle_\varphi v(\mu_\varphi)\omega_{\varphi}^{[n]}\\
&+\int_X \dot{\varphi}\sum_{j=1}^{\ell} v_{,j}(\mu_\varphi)(\mu_{\varphi}^{\xi_j}+\mathcal{L}_{J\xi_j}\varphi)\omega_{\varphi}^{[n]}\\
=&n(d_\varphi{\bf I}_{v})(\dot{\varphi})-\int_X \dot{\varphi} \sum_{j=1}^{\ell}v_{,j}(\mu_\varphi)(\mathcal{L}_{J\xi_j}\varphi)\omega_{\varphi}^{[n]}+\int_X \langle d\dot{\varphi},d\varphi\rangle_\varphi v(\mu_\varphi)\omega_{\varphi}^{[m]}\\
&+\int_X \dot{\varphi}\sum_{j=1}^{\ell} v_{,j}(\mu_\varphi)(\mu_{\varphi}^{\xi_j}+\mathcal{L}_{J\xi_j}\varphi)\omega_{\varphi}^{[n]} \\
=&n(d_\varphi{\bf I}_{v})(\dot{\varphi})+\int_X \langle d\dot{\varphi},d\varphi\rangle_\varphi v(\mu_\varphi)\omega_{\varphi}^{[n]} +\int_X \dot{\varphi}\langle (dv)(\mu_\varphi),\mu_{\varphi}\rangle\omega_{\varphi}^{[n]} \\
=&n(d_\varphi{\bf I}_v)(\dot{\varphi})+\int_X \langle d\dot{\varphi},d\varphi\rangle_\varphi v(\mu_\varphi)\omega_{\varphi}^{[n]}+\int_X \dot{\varphi}\left(\frac{1}{2}\tilde{v}(\mu_\varphi)-n v(\mu_\varphi)\right)\omega_\varphi^{[n]}\\
=&\int_X \langle d\dot{\varphi},d\varphi\rangle_\varphi v(\mu_\varphi)\omega_{\varphi}^{[n]}+\frac{1}{2}(d_\varphi{\bf I}_{\tilde{v}})(\dot{\varphi}). 
\end{split}
\]
By \eqref{v-variation} we have
\[
\int_X \langle d\dot{\varphi},d\varphi\rangle_\varphi v(\mu_\varphi)\omega_{\varphi}^{[n]} =-d_\varphi\left[\int_X \varphi v(\mu_\varphi)\omega_\varphi^{[n]}\right]+(d_\varphi{\bf I}_v)(\dot{\varphi}),
\]
and hence
\begin{equation}\label{I_Fano2}
{\bf I}^{\omega_0}_{v}(\varphi)={\bf I}_v(\dot{\varphi})+\frac{1}{2}{\bf I}_{\tilde v}(\varphi)-\int_X\varphi v(\mu_\varphi)\omega_\varphi^{[n]}. 
\end{equation}
Substituting \eqref{I_Fano1} and \eqref{I_Fano2} in \eqref{Chen-Tian} we obtain the formula \eqref{eq:Mvtv}. \end{proof}

Using Lemma~\ref{Mab-Fano}, we obtain
\begin{lemma}\label{M_v/D_v}
Let $(X,\T,\alpha=2\pi c_1(X))$ be a Fano manifold and $v>0$ a smooth positive weight function on $\Pol_X$.  Then the weighted Ding functional ${\bf D}_v$ is related to the weighted Mabuchi energy ${\bf M}_{v, \tilde v}$ by
\[
{\bf M}_{v, \tilde v}(\omega_\varphi)=2\vol_v(X) {\bf D}_v(\omega_{\varphi}) 
-\int_X \mathring{h}_\varphi v(\mu_\varphi)\omega_\varphi^{[n]}+\int_X \mathring{h}_0 v(\mu_0)\omega_0^{[n]}.
\]
\end{lemma}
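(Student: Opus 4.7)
My plan is to take the Chen--Tian-type expression for ${\bf M}_{v,\tilde v}$ given by Lemma~\ref{Mab-Fano} and rewrite its entropy piece using the pointwise relation that the complex Monge--Amp\`ere equation imposes between the normalized Ricci potentials of $\omega_\varphi$ and $\omega_0$. The residual ``logarithm of an integral'' that arises will be recognized as the second piece in Definition~\ref{d-v-ding} of the Ding functional, producing the factor $2\vol_v(X){\bf D}_v(\omega_\varphi)$; the remaining unpaired terms are exactly those advertised on the right-hand side of the identity to be proved.

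The first step is to observe that formula~\eqref{eq:Mvtv} is invariant under shifts $h_0\mapsto h_0+c$ (the two contributions involving $h_0$ cancel against each other once paired against the weight $v$), so I may freely take $h_0=\mathring h_0$. Next, starting from $\Ric(\omega_\varphi)=\Ric(\omega_0)-\tfrac12 dd^c\log(\omega_\varphi^n/\omega_0^n)$ and $\Ric(\omega_0)=\omega_0+\tfrac12 dd^c\mathring h_0$, I would check that the function $\mathring h_0-2\varphi-\log(\omega_\varphi^n/\omega_0^n)$ is a Ricci potential of $\omega_\varphi$; matching it against the normalization~\eqref{normalized-Ricci} for $\mathring h_\varphi$ yields the pointwise identity
\[
\mathring h_\varphi \;=\; \mathring h_0-2\varphi-\log\!\left(\frac{\omega_\varphi^n}{\omega_0^n}\right)-\log\!\left(\int_X e^{\mathring h_0-2\varphi}\,\frac{\omega_0^{[n]}}{\vol(X)}\right),
\]
whose last (constant) term encodes precisely the logarithmic piece of ${\bf D}_v$.

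Substituting this into the entropy factor $\log\!\bigl(v(\mu_\varphi)\omega_\varphi^{[n]}/(e^{\mathring h_0}\omega_0^{[n]})\bigr)$ appearing in~\eqref{eq:Mvtv} and then integrating against $v(\mu_\varphi)\omega_\varphi^{[n]}$, I expect two clean cancellations: the cross term $2\int_X\varphi\, v(\mu_\varphi)\omega_\varphi^{[n]}$ produced by the entropy kills the one appearing with opposite sign on the last line of~\eqref{eq:Mvtv}, and the Duistermaat--Heckman invariance~\eqref{DH} applied to $v\log v$ on $\Pol_X$ kills the two terms $\int_X\log v(\mu_\varphi)v(\mu_\varphi)\omega_\varphi^{[n]}$ and $\int_X\log v(\mu_0)v(\mu_0)\omega_0^{[n]}$ against each other. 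After grouping, the surviving expression should read $-\int_X \mathring h_\varphi v(\mu_\varphi)\omega_\varphi^{[n]}+\int_X \mathring h_0 v(\mu_0)\omega_0^{[n]}-2{\bf I}_v(\varphi)-\vol_v(X)\log\!\bigl(\int_X e^{\mathring h_0-2\varphi}\omega_0^{[n]}/\vol(X)\bigr)$, and the last two terms are by definition $2\vol_v(X){\bf D}_v(\omega_\varphi)$. The only delicate point is the book-keeping of the three normalizations in play ($h_0$ in~\eqref{eq:Mvtv}, $\mathring h_0$ via~\eqref{normalized-Ricci}, and the implicit one in $\mathring h_\varphi$); once this is handled carefully, no further analytic input is required and the identity drops out algebraically.
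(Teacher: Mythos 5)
Your argument is correct and follows essentially the same route as the paper: both start from the Chen--Tian expression of Lemma~\ref{Mab-Fano} and feed in the Monge--Amp\`ere relation between the Ricci potentials of $\omega_\varphi$ and $\omega_0$, with the cancellations you list (the $2\int_X\varphi\,v(\mu_\varphi)\omega_\varphi^{[n]}$ terms and the Duistermaat--Heckman invariance of $\int v\log v$) working exactly as claimed. The only organizational difference is that you fix the additive constant up front via the normalization $\int_X e^{\mathring h_\varphi}\omega_\varphi^{[n]}=\vol(X)$, so the logarithmic Ding term appears pointwise, whereas the paper leaves the constant undetermined, pins it by integrating against $v(\mu_\varphi)\omega_\varphi^{[n]}$, and then exponentiates and integrates against $\omega_\varphi^{[n]}$ to recover it --- a purely cosmetic distinction.
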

\begin{proof}
Using the relations
\[
\begin{split}
dd^{c}\log\left(\frac{v(\mu_\varphi)\omega^{m}_\varphi}{\omega_0^{m}}\right) &= dd^{c}\log v(\mu_\varphi)+2(\Ric(\omega_0)-\Ric(\omega_\varphi)) \\
\Ric(\omega_{0})-\Ric(\omega_\varphi) &= -dd^c\left(\varphi+\frac{1}{2}(h_\varphi-h_0)\right),
\end{split}
\]
we obtain
\begin{equation}\label{log-h}
\log\left(\frac{v(\mu_\varphi)\omega^{[n]}_\varphi}{\omega_0^{[n]}}\right)=\log v(\mu_\varphi) -h_\varphi+h_0-2\varphi+C(\varphi)/{\vol_v(X)},
\end{equation}
where $C(\varphi)$ is a constant depending on $\varphi$ to be determined below. By using the identity (see e.g. \cite[Lemma 2]{lahdili})
\[
\int_X \log v(\mu_\varphi) v(\mu_\varphi)\omega^{[n]}_\varphi = \int_{\Pol_X} \log v(x) v(x) d\mu_{\rm DH} = \int_X \log v(\mu_0) v(\mu_0)\omega^{[n]}_0, 
\]
and integrating both sides of \eqref{log-h} with respect to the measure $v(\mu_\varphi)\omega_\varphi^{[n]}$ gives
\[\begin{split}
 C(\varphi)=&\int_X\log\left(\frac{v(\mu_\varphi)\omega^{[n]}_\varphi}{\omega_0^{[n]}}\right) v(\mu_\varphi)\omega^{[n]}_\varphi +\int_X h_\varphi v(\mu_\varphi)\omega_\varphi^{[n]}-\int_X h_0v(\mu_\varphi)\omega_\varphi^{[n]}\\
&+2\int_X\varphi v(\mu_\varphi)\omega_\varphi^{[n]}-\int_X   \log v(\mu_0) v(\mu_0)\omega^{[n]}_0. 
\end{split}
\]
By Lemma \ref{Mab-Fano} we thus have
\[
C(\varphi)=({\bf M}_{v,\tilde v}+2{\bf I}_{v})(\varphi)+\int_X h_\varphi v(\mu_\varphi)\omega_\varphi^{[n]}-\int_X h_0 v(\mu_0)\omega_0^{[n]}.
\]
 Substituting back in \eqref{log-h} gives
\[
-h_\varphi+\frac{1}{\vol_v(X)}\left(({\bf M}_{v,\tilde v}+2{\bf I}_{v})(\varphi)+\int_X h_\varphi v(\mu_\varphi)\omega_\varphi^{[n]}-\int_X h_0 v(\mu_0)\omega_0^{[n]}\right)=\log\left(\frac{\omega^{[n]}_\varphi}{e^{-2\varphi+h_0}\omega_0^{[n]}}\right). 
\]
Exponentiating and integrating the both sides with respect to the measure $\omega_\varphi^{[n]}$ yields
\[\begin{split}
&\left(\int_Xe^{h_\varphi} \omega_\varphi^{[n]}\right)\exp\left(-\frac{1}{\vol_v(X)}\left(({\bf M}_{v,\tilde v}+2{\bf I}_{v})(\varphi)+\int_X h_\varphi v(\mu_\varphi)\omega_\varphi^{[n]}-\int_X h_0 v(\mu_0)\omega_0^{[n]}\right)\right)\\
&=\int_X e^{-2\varphi+h_0}\omega^{[n]}_0. 
\end{split}
\]
Taking the $\log$ of both sides and comparing with the Definition~\ref{d-v-ding},  we obtain the claim. \end{proof}
The above link between ${\bf M}_{v, \tilde v}$ and ${\bf D}_v$ is the key for the following
\begin{thm}\cite[Thm.3.6]{HL}\label{D/M-coersive} On  a smooth Fano manifold $(X, \T)$ with K\"ahler class $\alpha=2\pi c_1(X)$,  the weighted Mabuchi energy ${\bf M}_{v, \tilde v}$  is coercive with respect to  $\T_{\C}$ iff the weighted Ding functional ${\bf D}_v$ is coercive with respect to  $\T_{\C}$.
\end{thm}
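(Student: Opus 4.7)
The strategy is to use the explicit identity in Lemma~\ref{M_v/D_v} as the bridge between the two functionals. That identity exhibits the map
\[ R(\varphi) := \int_X \mathring{h}_\varphi\, v(\mu_\varphi)\,\omega_\varphi^{[n]} \]
as essentially the difference $2\vol_v(X)\mathbf{D}_v(\omega_\varphi) - \mathbf{M}_{v, \tilde v}(\omega_\varphi)$, modulo an additive constant independent of $\varphi$. Hence, if $R(\varphi)$ can be controlled in a manner compatible with the $\mathbf{J}$-coercivity notion, the equivalence of coercivity of $\mathbf{M}_{v, \tilde v}$ and $\mathbf{D}_v$ follows formally.

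For the direction ``$\mathbf{D}_v$ coercive $\Rightarrow$ $\mathbf{M}_{v, \tilde v}$ coercive'', the plan is to prove a uniform upper bound on $R(\varphi)$. Jensen's inequality applied to the probability measure $v(\mu_\varphi)\omega_\varphi^{[n]}/\vol_v(X)$ and the exponential function, combined with the normalization $\int_X e^{\mathring{h}_\varphi}\omega_\varphi^{[n]} = \vol(X)$ and with $v \leq \sup_{\Pol_X} v$, yields
\[ R(\varphi) \leq \vol_v(X)\,\log\!\left(\frac{(\sup_{\Pol_X} v)\,\vol(X)}{\vol_v(X)}\right). \]
Inserted into the identity of Lemma~\ref{M_v/D_v}, this produces the inequality $\mathbf{M}_{v, \tilde v}(\omega_\varphi) \geq 2\vol_v(X)\mathbf{D}_v(\omega_\varphi) - C$, which transfers the $\T_\C$-coercivity of $\mathbf{D}_v$ to $\mathbf{M}_{v, \tilde v}$.

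The reverse implication requires a complementary lower bound on $R(\varphi)$. I would extract it from the Chen--Tian decomposition of $\mathbf{M}_{v, \tilde v}$ established in Lemma~\ref{Mab-Fano}, which exhibits $\mathbf{M}_{v, \tilde v}$ as the sum of a weighted relative entropy
\[ \mathrm{Ent}_v(\omega_\varphi) := \int_X \log\!\left(\frac{v(\mu_\varphi)\omega_\varphi^{[n]}}{e^{h_0}\omega_0^{[n]}}\right) v(\mu_\varphi)\,\omega_\varphi^{[n]} \]
(bounded below by the Gibbs inequality) and twice a weighted Aubin-type term $\mathbf{J}_v(\omega_\varphi) := \int_X \varphi\, v(\mu_\varphi)\omega_\varphi^{[n]} - \mathbf{I}_v(\varphi)$ that is uniformly comparable to the ordinary Aubin functional $\mathbf{J}$ (since $v$ is bounded above and below on $\Pol_X$). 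Feeding this decomposition back into the identity of Lemma~\ref{M_v/D_v} and using the Jensen bound inherent to the $\log$-integral in the definition of $\mathbf{D}_v$ should produce an estimate $R(\varphi) \geq -C - \varepsilon\,\mathbf{J}(\omega_\varphi)$ for any $\varepsilon > 0$ with a corresponding constant $C(\varepsilon)$; the $\varepsilon\mathbf{J}$ term is then absorbed at the cost of a slightly smaller coercivity constant for $\mathbf{D}_v$.

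The main obstacle is exactly this lower bound on $R(\varphi)$. Unlike the upper bound, it cannot be obtained by elementary convexity, because $\mathring{h}_\varphi$ can be pointwise unbounded below along sequences of potentials with $\mathbf{J}(\omega_\varphi) \to \infty$. The Chen--Tian decomposition and the nonnegativity of $\mathrm{Ent}_v$ are precisely what is needed to convert this pointwise pathology into an integral estimate of $\mathbf{J}$-type compatible with coercivity.
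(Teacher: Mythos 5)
Your treatment of the direction ``$\mathbf{D}_v$ coercive $\Rightarrow$ $\mathbf{M}_{v,\tilde v}$ coercive'' is correct and is exactly the content the paper extracts from Lemma~\ref{M_v/D_v}: Jensen's inequality for the probability measure $v(\mu_\varphi)\omega_\varphi^{[n]}/\vol_v(X)$ gives
$R(\varphi)\le \vol_v(X)\log\bigl((\sup_{\Pol_X}v)\,\vol(X)/\vol_v(X)\bigr)$, whence $\mathbf{M}_{v,\tilde v}\ge 2\vol_v(X)\,\mathbf{D}_v-C$; this is the weighted analogue of the elementary step $e^x>x+1$ used in the proof of Lemma~\ref{l:Mabuchi}, and it is the only direction the paper actually needs in Theorem~\ref{Cor:Sol->CscK}.

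The reverse direction is where your proposal breaks down, and the gap is not just a missing computation: the estimate you aim for, $R(\varphi)\ge -C_\varepsilon-\varepsilon\,\mathbf{J}(\omega_\varphi)$, is \emph{false}. Combining Lemmas~\ref{Mab-Fano} and~\ref{M_v/D_v} gives, up to additive constants, $R(\varphi)=2\vol_v(X)\,\mathbf{D}_v(\omega_\varphi)-\mathrm{Ent}_v(\omega_\varphi)-2\mathbf{J}_v(\omega_\varphi)$, so a lower bound on $R$ of the stated form would require an upper bound on the entropy term $\mathrm{Ent}_v$ by $\mathbf{D}_v+\varepsilon\mathbf{J}+C_\varepsilon$. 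No such bound exists: there are sequences $\varphi_j$ with $\mathbf{J}(\omega_{\varphi_j})$ and $\mathbf{D}_v(\omega_{\varphi_j})$ bounded but $\int_X\log(\omega_{\varphi_j}^{[n]}/\omega_0^{[n]})\,\omega_{\varphi_j}^{[n]}\to+\infty$ (the entropy is not controlled by any energy functional), and along such a sequence $R(\varphi_j)\to-\infty$ while $\mathbf{J}$ stays bounded. Feeding the Chen--Tian decomposition ``back into'' the identity, as you suggest, only reproduces the identity; the Gibbs inequality bounds the entropy from \emph{below}, which is the wrong side for your purpose. This is why the implication ``$\mathbf{M}_{v,\tilde v}$ coercive $\Rightarrow$ $\mathbf{D}_v$ coercive'' cannot be obtained from a pointwise comparison of the two functionals. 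The paper does not prove this direction either --- it quotes \cite[Thm.~30]{HL} --- and the proof there goes through an entirely different mechanism: coercivity of $\mathbf{M}_{v,\tilde v}$ yields, by the variational method in the finite-energy space (Legendre duality for the $\log\int e^{-2\varphi}$ term in the spirit of Berman and \cite{BBEGZ}, plus regularity of minimizers), the existence of a $v$-soliton, and existence in turn implies coercivity of $\mathbf{D}_v$ by the Darvas--Rubinstein principle \cite{DR}. If you want a self-contained argument you must import that machinery; it cannot be replaced by a convexity estimate on $R(\varphi)$.
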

 %
 %

\section{Proof of main results}
\subsection{Mabuchi's observation for $v$-solitons}
Let $\omega\in 2\pi c_1(X)$ be a $\T$-invariant K\"ahler metric. We denote by $\tor_{\omega}$ the space of Killing potentials of vector fields in $\tor$, i.e.
\[ \tor_{\omega}:=\{ f\in C^{\infty}(X) \, | \, \zeta = -\omega^{-1}(df) \in \tor\}, \]
and by $\mathring{\tor}_{\omega}:=\{\mu^\zeta_\omega\mid \zeta\in \tor\}$ be the subspace of \emph{canonically normalized} Killing potentials, i.e.
\[ \mathring{\tor}_{\omega}:= \left\{ f\in \tor_{\omega} \, \Big| \int_X f e^{h_{\omega}}\omega^{[n]}=0\right\},\]
where $h_{\omega}$ is a $\T$-invariant $\omega$-relative Ricci potential, see \eqref{Ricci-potential}.  
By virtue of the normalization \eqref{normalized-mu}, we thus have
\[ \mathring{\tor}_{\omega} = \{ \mu^{\zeta}_\omega \, |\, \zeta \in \tor\}.\]
Taking trace w.r.t. $\omega$ and interior product with $\zeta$ in \eqref{Ricci-potential} gives
\[\Scal(\omega)-2n = -\Delta_{\omega} h_{\omega}, \qquad  \Delta_{\omega} \mu_\omega^{\zeta} - 2\mu_{\omega}^{\zeta} - \langle dh_{\omega}, d\mu_{\omega}^{\zeta}\rangle_{\omega}=0.\]
Taking the $L_2$-product of the first relation with $\mu_{\omega}^{\zeta}$,  integrating by parts  and using the second identity gives
\begin{equation}\label{Fano-Scal}
 \int_X\left(\Scal(\omega)-2(n+1)\right) \mu_{\omega}^{\zeta} \omega^{[n]} = 0. 
 \end{equation}
As we also have $\int_X \mu_{\omega}^{\zeta} e^{h_{\omega}} \omega^{[n]} = 0$ by virtue of \eqref{normalized-mu},  we derive the following
\begin{lemma} 
Suppose $\mathring{h}_{\omega}$ is a $\T$-invariant  $\omega$-relative Ricci potential normalized by 
$\int_X e^{\mathring{h}_{\omega}} \omega^{[n]}= \int_X \omega^{[n]}$. Then the $L_2$-orthogonal projection  
$\proj_{\omega}(e^{{\mathring h}_\omega})$ of $e^{\mathring{h}_\omega}$ to the space $\tor_{\omega}$ of Killing potentials of elements in $\tor$ is given by 
\[ 
\proj_{\omega}(e^{\mathring{h}_\omega}) = (n+1) - \frac{1}{2}\ell_{\rm ext}(\mu_\omega),
\]
where $\ell_{\rm ext}$ is the extremal affine-linear function of $(X, \T, 2\pi c_1(X))$.
\end{lemma}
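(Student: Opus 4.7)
The plan is to identify $\proj_\omega(e^{\mathring h_\omega})$ by showing that $(n+1) - \tfrac{1}{2}\ell_{\rm ext}(\mu_\omega)$, which already lies in the finite-dimensional Killing-potential space $\tor_\omega = \R\cdot 1 \oplus \mathring{\tor}_\omega$, satisfies the defining $L^2$-projection property. Concretely, I would verify that the difference
\[
e^{\mathring h_\omega} - (n+1) + \tfrac{1}{2}\ell_{\rm ext}(\mu_\omega)
\]
is $L^2(\omega^{[n]})$-orthogonal both to the constant $1$ and to each canonically normalized Killing potential $\mu_\omega^\zeta$, $\zeta\in\tor$. This splits the proof into two independent computations.

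For the $L^2$-pairing against $1$, three inputs are needed: the normalization $\int_X e^{\mathring h_\omega}\omega^{[n]} = \vol(X)$; the Futaki-Mabuchi defining property that $\ell_{\rm ext}(\mu_\omega)$ is the $L^2$-projection of $\Scal(\omega)$ onto $\tor_\omega$, which pairing against $1\in\tor_\omega$ gives $\int_X\ell_{\rm ext}(\mu_\omega)\omega^{[n]} = \int_X\Scal(\omega)\omega^{[n]}$; and the Fano cohomological identity $\int_X\Scal(\omega)\omega^{[n]} = 2n\,\vol(X)$, following from the pointwise relation $\Ric(\omega)\wedge\omega^{n-1} = \tfrac{\Scal(\omega)}{2n}\omega^n$ and $\int_X\Ric(\omega)\wedge\omega^{n-1} = \int_X\omega^n$ (valid since $\Ric(\omega)\in[\omega]$). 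These combine into $\vol(X) - (n+1)\vol(X) + \tfrac{1}{2}\cdot 2n\,\vol(X) = 0$.

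For the $L^2$-pairing against $\mu_\omega^\zeta$, the canonical normalization of the momentum map (Remark~\ref{r:canonical_normalization}) gives $\int_X\mu_\omega^\zeta e^{\mathring h_\omega}\omega^{[n]} = 0$, which holds for any choice of Ricci potential since that condition is insensitive to the additive constant separating $h_\omega$ from $\mathring h_\omega$. Combining this with the projection characterization of $\ell_{\rm ext}$ tested against $\mu_\omega^\zeta$, namely $\int_X\ell_{\rm ext}(\mu_\omega)\mu_\omega^\zeta\omega^{[n]} = \int_X\Scal(\omega)\mu_\omega^\zeta\omega^{[n]}$, together with the identity \eqref{Fano-Scal} derived immediately above the lemma,
\[
\int_X\Scal(\omega)\mu_\omega^\zeta\omega^{[n]} = 2(n+1)\int_X\mu_\omega^\zeta\omega^{[n]},
\]
the pairing telescopes to $0 - (n+1)\int_X\mu_\omega^\zeta\omega^{[n]} + (n+1)\int_X\mu_\omega^\zeta\omega^{[n]} = 0$.

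I do not anticipate any genuine obstacle: the lemma is essentially a bookkeeping consequence of three identities already assembled on the page — the normalization of $\mathring h_\omega$, the canonical normalization of $\mu_\omega$, and the derived formula \eqref{Fano-Scal} — together with the Futaki-Mabuchi characterization of $\ell_{\rm ext}$ as an $L^2$-projection. The only mildly delicate observation is that since $\int_X\mu_\omega^\zeta\omega^{[n]}$ need not vanish, constants and canonically normalized Killing potentials are not $L^2(\omega^{[n]})$-orthogonal, so both orthogonality conditions must be handled in tandem rather than being decoupled by a direct-sum decomposition of $\tor_\omega$ into $L^2$-orthogonal summands.
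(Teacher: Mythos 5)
Your proposal is correct and uses exactly the same ingredients as the paper's (very terse) proof: the identity \eqref{Fano-Scal}, the canonical normalization $\int_X \mu_\omega^{\zeta} e^{h_\omega}\omega^{[n]}=0$, the normalization of $\mathring{h}_\omega$, the cohomological identity $\int_X\Scal(\omega)\,\omega^{[n]}=2n\,\vol(X)$, and the Futaki--Mabuchi characterization of $\ell_{\rm ext}$ as the projection of $\Scal(\omega)$. You merely verify the orthogonality of the residual against $1$ and against each $\mu_\omega^{\zeta}$ explicitly, whereas the paper phrases the same argument as ``two functions orthogonal to $\mathring{\tor}_\omega$ with the same mean have the same projection''; your closing caveat about the non-orthogonality of constants and normalized Killing potentials is a correct and worthwhile precision.
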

\begin{proof} 
By the definition \cite{FM} of $\ell_{\rm ext}$,  $\proj_{\omega}(\Scal(\omega))= \ell_{\rm ext}(\mu_{\omega})$. It thus follows from \eqref{Fano-Scal} and \eqref{normalized-mu} that 
\[\proj_{\omega}(e^{h_\omega}) = \lambda \proj_{\omega}\left((n+1) - \frac{1}{2}\Scal(\omega)\right),\] so the claim follows by the normalization of $\mathring{h}_{\omega}$. 
\end{proof}
In particular, if $v$ is affine linear, i.e. if $\omega$ is a Mabuchi soliton, then $v=(n+1)-\frac{1}{2}\ell_{\rm ext}$ is the corresponding positive weight function. 
\subsection{Nakamura's argument for $v$-solitons}

We first extend \cite[Lemma 2.3]{NN24} for an arbitrary weight function $v$.

\begin{lemma}\label{l:Mabuchi}
Let $v$ be a positive smooth weight function on $\Pol_X$ such that 
\[
\vol_v(X):= \int_X{v}(\mu_\omega)\omega^{[n]}= \vol(X).
\]
Then, for any $\T$-invariant K\"ahler metric $\omega \in\mathcal{K}_\alpha^{\T}(X)$ we have
\begin{equation}\label{eq:Mabuchi_Ding}
{\bf M}_{1, 2[(n+1) -v]}(\omega)=\int_X\mathring{h}_{\omega_0} \omega_0^{[n]}-\int_X\mathring{h}_\omega \omega^{[n]}+2\vol(X) {\bf D}_{v}(\omega).
\end{equation}
In particular, 
\[
\frac{1}{2\vol(X)}{\bf M}_{1, 2[(n+1)-v]}(\omega)\geq {\bf D}_{v}(\omega)+C
\]
where $C$ is a constant depending only on $X$ and $v$.
\end{lemma}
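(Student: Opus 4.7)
The plan is to reduce the identity to the already established Lemma~\ref{M_v/D_v} specialized at the constant weight $v\equiv 1$, and then exploit the fact that both ${\bf M}_{1, w}$ (in the second argument $w$) and ${\bf D}_v$ depend affinely on $v$ in a way that the dependence on $v$ cancels on the two sides. The main identity then follows by elementary algebra, and the coercivity-type bound will come out of Jensen's inequality applied to the normalization of the Ricci potential.

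\textbf{Step 1: Specialization of Lemma~\ref{M_v/D_v} at $v\equiv 1$.} Since for $v\equiv 1$ one has $\tilde v = 2n$, Lemma~\ref{M_v/D_v} yields
\[
{\bf M}_{1, 2n}(\omega_\varphi) \;=\; 2\vol(X)\,{\bf D}_1(\omega_\varphi)\;-\;\int_X \mathring{h}_\varphi\,\omega_\varphi^{[n]}\;+\;\int_X \mathring{h}_0\,\omega_0^{[n]}.
\]
This is the ``$v=1$'' version of the identity we aim to prove.

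\textbf{Step 2: Linear dependence in the weight.} From the definition of ${\bf M}_{v,w}$ and ${\bf I}_w$, together with the normalizations ${\bf M}_{v,w}(\omega_0)=0={\bf I}_w(0)$, one checks
\[
{\bf M}_{1,\,2[(n+1)-v]}(\omega_\varphi) - {\bf M}_{1, 2n}(\omega_\varphi) \;=\; 2\,{\bf I}_{1-v}(\varphi).
\]
On the other hand, Definition~\ref{d-v-ding} and the assumption $\vol_v(X)=\vol(X)$ give
\[
{\bf D}_v(\omega_\varphi) - {\bf D}_1(\omega_\varphi) \;=\; \frac{{\bf I}_1(\varphi)-{\bf I}_v(\varphi)}{\vol(X)} \;=\; \frac{{\bf I}_{1-v}(\varphi)}{\vol(X)},
\]
so that $2\vol(X)[{\bf D}_v(\omega_\varphi)-{\bf D}_1(\omega_\varphi)] = 2\,{\bf I}_{1-v}(\varphi)$.

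\textbf{Step 3: Combination.} Adding $2\,{\bf I}_{1-v}(\varphi)$ to both sides of the $v=1$ identity from Step~1 and using the relations from Step~2, the ${\bf I}_{1-v}$ contributions match:
\[
{\bf M}_{1,\,2[(n+1)-v]}(\omega_\varphi) \;=\; 2\vol(X)\,{\bf D}_v(\omega_\varphi)\;-\;\int_X \mathring{h}_\varphi\,\omega_\varphi^{[n]}\;+\;\int_X \mathring{h}_0\,\omega_0^{[n]},
\]
which is exactly \eqref{eq:Mabuchi_Ding}.

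\textbf{Step 4: The inequality via Jensen.} By the normalization $\int_X e^{\mathring{h}_\omega}\omega^{[n]} = \vol(X)$ and the concavity of $\log$,
\[
\frac{1}{\vol(X)}\int_X \mathring{h}_\omega\,\omega^{[n]} \;\leq\; \log\!\left(\frac{1}{\vol(X)}\int_X e^{\mathring{h}_\omega}\,\omega^{[n]}\right) \;=\; 0,
\]
hence $-\int_X \mathring{h}_\omega\,\omega^{[n]} \geq 0$. Plugging this into \eqref{eq:Mabuchi_Ding} and dividing by $2\vol(X)$ yields the claim with
$C = \frac{1}{2\vol(X)}\int_X \mathring{h}_0\,\omega_0^{[n]}$, which depends only on the fixed reference data. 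There is no serious obstacle here: the argument is purely formal once the $v=1$ specialization of Lemma~\ref{M_v/D_v} is in hand, and the only non-trivial quantitative input is Jensen's inequality.
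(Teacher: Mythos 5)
Your proof is correct, but it follows a genuinely different route from the paper's. The paper proves Lemma~\ref{l:Mabuchi} by a self-contained variational computation: it differentiates ${\bf M}_{1,w}$ along a path of potentials, substitutes $\Scal(\omega_\varphi)-2n=-\Delta_{\omega_\varphi}\mathring{h}_{\omega_\varphi}$, computes the variation of the normalized Ricci potential, and matches the result against \eqref{d-Dv}; the inequality is then obtained from $e^x\ge x+1$. You instead observe that the claimed identity is the specialization at $v\equiv 1$ of the already-proved Lemma~\ref{M_v/D_v} (where $\tilde v=2n$ and $\vol_1(X)=\vol(X)$), and that both sides depend on the weight only through the affine terms ${\bf I}_{2(1-v)}(\varphi)$ and $-{\bf I}_v(\varphi)/\vol_v(X)$, which cancel exactly under the normalization $\vol_v(X)=\vol(X)$. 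All the steps check out: $\tilde v=2n$ for $v=1$; ${\bf M}_{1,w_1}-{\bf M}_{1,w_2}={\bf I}_{w_1-w_2}$ since $\Scal_1=\Scal$ and both functionals vanish at $\omega_0$; the difference of Ding functionals is $({\bf I}_1-{\bf I}_v)/\vol(X)$ because the log term is weight-independent; and ${\bf I}_{1-v}$ descends to $\mathcal{K}^{\T}_\alpha(X)$ precisely because $\vol_v(X)=\vol(X)$, so no normalization ambiguity arises. Your Jensen argument for $\int_X\mathring{h}_\omega\,\omega^{[n]}\le 0$ is equivalent to the paper's pointwise $e^x\ge x+1$ and yields an equally valid (in fact slightly cleaner) constant $C$. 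What your approach buys is conceptual economy: it exhibits \eqref{eq:Mabuchi_Ding} as a formal consequence of the Han--Li relation at $v=1$ plus linearity in the weight, which is exactly the mechanism the paper later exploits in the non-Archimedean setting in the proof of Theorem~\ref{(1,w)-YTD}, where ${\bf M}^{\rm NA}_{1,2[(n+1)-v]}-2\vol(X){\bf D}^{\rm NA}_v$ is shown to be independent of $v$. What it costs is logical dependence on Lemma~\ref{M_v/D_v} (whose proof is the longest computation in the paper), whereas the paper's direct argument for Lemma~\ref{l:Mabuchi} is independent of it and shorter than the chain Lemma~\ref{Mab-Fano} $\Rightarrow$ Lemma~\ref{M_v/D_v} $\Rightarrow$ your Step~1.
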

 \begin{proof} We shall use the identification $\mathcal{H}^{\T}_{\omega_0}(X)/\R \cong \mathcal{K}_\alpha^{\T}(X)$ and for any $\varphi \in \mathcal{H}^{\T}_{\omega_0}(X)$ we use the under-script $\varphi$ to denote the geometric quantities with respect to the K\"ahler metric $\omega_{\varphi}$.
In the computations below, we also let
\[w:= 2[(n+1)-v].\]
Using the very definition of the $(1,w)$-Mabuchi energy  in \cite{lahdili}, we have 
\[
\begin{split}
(d_{\omega_\varphi }{\bf M}_{1, w})(\dot{\varphi})=&-\int_X\left(\Scal(\omega_{\varphi})-w(\mu_{\omega_\varphi})\right)\dot{\varphi}\omega_{\varphi}^{[n]}\\
=&-\int_X\left(-\Delta_{\omega_\varphi}h_{\omega_\varphi}+2n-w(\mu_{\omega_\varphi})\right)\dot{\varphi}\omega_{\varphi}^{[n]}\\
=&\int_X\left(h_{\omega_\varphi}\Delta_{\omega_\varphi}\dot{\varphi}+(w(\mu_{\omega_\varphi})-2n)\dot{\varphi}\right)\omega_{\varphi}^{[n]}\\
=&-d_{\omega_\varphi}\left(\int_X h_{\omega_\varphi}\omega_{\varphi}^{[n]}\right)(\dot{\varphi}) + \int_X\dot{h}_{\omega_\varphi}\omega_{\varphi}^{[n]}+ \int_X(w(\mu_{\omega_\varphi})-2n)\dot{\varphi}\omega_{\varphi}^{[n]}
\end{split}
\]
where we used that $\Scal(\omega_\varphi)-2n=-\Delta_{\omega_\varphi}h_{\omega_\varphi}$.  We further assume that the Ricci potential is normalized, i.e. $h_{\omega_\varphi}= \mathring{h}_{\omega_\varphi}$, see \eqref{normalized-Ricci}. We then have
\[
\mathring{h}_{\omega_\varphi}=\mathring{h}_{\omega_0}-2\varphi-\log\left(\frac{\omega_\varphi^{m}}{\omega_0^{m}}\right)-\log\int_X e^{h_{\omega_0}-2\varphi}\frac{\omega_0^{[n]}}{\vol(X)}.
\]
The variation with respect to $\varphi$ of both sides of the above equation gives
\[
(d_{\omega_\varphi} \mathring{h}_{\omega_\varphi})(\dot \varphi)=\left(\Delta_{\omega_\varphi}-2\right)\Big(\dot{\varphi} - \int_X\dot{\varphi}e^{\mathring{h}_{\omega_\varphi}}\frac{\omega_\varphi^{[n]}}{\vol(X)}\Big). 
\]
It follows that for $h_{\omega_\varphi}=\mathring{h}_{\omega_\varphi}$
\[
\int_X\dot{h}_{\omega_\varphi}\omega_{\varphi}^{[n]}=2\int_X\dot{\varphi}\left(e^{\mathring{h}_{\omega_\varphi}}-1\right)\omega_\varphi^{[n]}.
\]
Substituting back gives
\[
(d_{\omega_\varphi }{\bf M}_{1, w})(\dot{\varphi})=-d_{\omega_\varphi}\left(\int_X\mathring{h}_{\omega_\varphi}\omega_{\varphi}^{[n]}\right)(\dot{\varphi}) +\int_X\dot{\varphi}\left(2e^{\mathring{h}_{\omega_\varphi}}+ w(\mu_{\omega_\varphi})-2(n+1)\right)\omega_\varphi^{[n]}. 
\]
As the derivative of the $v$-Ding functional is given by \eqref{d-Dv}, and using that  $v:=\frac{1}{2}\left(2(n+1)-w\right)$ satisfies  $\vol_{v}(X)=\vol(X)$,  we get
\[
(d_{\omega_\varphi} {\bf M}_{1, w})(\dot{\varphi})=-d_{\omega_\varphi}\left(\int_X\mathring{h}_{\omega_\varphi}\omega_{\varphi}^{[n]}\right)(\dot{\varphi}) +2\vol(X)(d_{\omega_\varphi}{\bf D}_{v}) (\dot\varphi). 
\]
Using that ${\bf M}_{1, w}(\omega_0)=0={\bf D}_{v}(\omega_0)$, we thus obtain  \eqref{eq:Mabuchi_Ding}. Using  $e^x \geq x+1$ we can write
\[
\int_X\mathring{h}_{\omega_\varphi}\omega_{\varphi}^{[n]}+\vol(X) = \int_X(\mathring{h}_{\omega_\varphi}+1)\omega_{\varphi}^{[n]}\leq \int_X e^{\mathring{h}_{\omega_\varphi}}\omega_{\varphi}^{[n]}=\vol(X).
\]
Letting $C:= \frac{1}{2\vol(X)}\int_X\mathring{h}_{\omega_0} \omega_0^{[n]}$, we  obtain the claimed inequality. \end{proof}

The following is a straightforward generalization for an arbitrary weight  function $v$ of an observation originally made by Nakamura for Mabuchi solitons~\cite{Mabuchi} (see also  \cite{NN24} for the more general $\sigma$-soliton case):

\begin{thm}\label{Cor:Sol->CscK}
Let $v$ be a  smooth positive weight function on $\Pol_X$ such that $\vol_v(X)=\vol(X)$.
If $(X,2\pi c_1(X))$ admits a $v$-soliton then it also admits $(1,2[(n+1)-v])$-cscK metric. In particular, the existence of a Mabuchi soliton implies the existence of a Calabi extremal metric $\omega_E$ in $2\pi c_1(X)$ whose scalar curvature satisfies $\Scal(\omega_E)<2(n+1)$.
\end{thm}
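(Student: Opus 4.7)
The plan is to chain together three ingredients: Han--Li's YTD-type theorem, the comparison estimate of Lemma~\ref{l:Mabuchi}, and the analytic converse ``coercivity implies existence'' for weighted cscK metrics. First, the hypothesis that $X$ carries a $v$-soliton triggers Theorem~\ref{thm:HL}, giving the $\T_{\C}$-invariance and $\T_{\C}$-coercivity of the $v$-Ding functional ${\bf D}_v$ on $\mathcal{K}^{\T}_\alpha(X)$.

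Second, I would transfer this coercivity to ${\bf M}_{1, 2[(n+1)-v]}$ via the estimate
\[\frac{1}{2\vol(X)}{\bf M}_{1, 2[(n+1)-v]}(\omega) \ge {\bf D}_v(\omega) + C\]
of Lemma~\ref{l:Mabuchi}. The $\T_{\C}$-invariance transfers as well, since that same lemma exhibits ${\bf M}_{1, 2[(n+1)-v]}(\omega) - 2\vol(X){\bf D}_v(\omega)$ as a constant plus $-\int_X \mathring{h}_\omega \omega^{[n]}$, and this integral is $\T_{\C}$-invariant by naturality of the Ricci form ($\mathring{h}_{\sigma^*\omega}=\sigma^*\mathring{h}_\omega$) followed by change of variables. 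Thus ${\bf M}_{1, 2[(n+1)-v]}$ inherits $\T_{\C}$-coercivity from ${\bf D}_v$.

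Third, I would invoke the converse implication ``${\bf M}_{1,w}$ coercive relative to $\T_{\C}$ implies the existence of a $(1,w)$-cscK metric in $\alpha$''; applying it with $w=2[(n+1)-v]$ gives a $\T$-invariant K\"ahler metric solving $\Scal(\omega)=2[(n+1)-v(\mu_\omega)]$, which is the first claim. For the Mabuchi-soliton specialization, $v=(n+1)-\tfrac{1}{2} \ell_{\rm ext}$ is affine-linear, so $w=\ell_{\rm ext}$, and the resulting metric $\omega_E$ satisfies $\Scal(\omega_E)=\ell_{\rm ext}(\mu_{\omega_E})$, which is exactly the Futaki--Mabuchi characterization of a Calabi extremal metric. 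The positivity $v>0$ on $\Pol_X$ is equivalent to $\ell_{\rm ext}<2(n+1)$ on $\Pol_X$, and evaluating along $\mu_{\omega_E}(X)\subseteq\Pol_X$ yields the strict pointwise inequality $\Scal(\omega_E)<2(n+1)$ on $X$.

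The main obstacle is the third step, namely the availability of the ``coercivity implies existence'' statement for a general smooth weight $w$ on the right-hand side of the scalar curvature equation. In the affine case $w=\ell_{\rm ext}$ relevant to the Mabuchi-soliton corollary, this is exactly Chen--Cheng~\cite{CC} together with He's complement~\cite{He}, so that corollary follows without further input beyond what is already set up in the earlier sections. For an arbitrary smooth $v$-soliton weight the prescribed profile $w=2[(n+1)-v]$ need not be affine, and the required statement is the corresponding weighted Chen--Cheng theorem; I would cite the appropriate extension in the literature rather than reprove it.
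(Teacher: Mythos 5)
Your proposal is correct and follows essentially the same route as the paper: Han--Li's Theorem~\ref{thm:HL} gives coercivity of ${\bf D}_v$, Lemma~\ref{l:Mabuchi} transfers it to ${\bf M}_{1,2[(n+1)-v]}$, and one concludes by the coercivity-implies-existence direction. The only point where you hedge --- the non-affine weight $w=2[(n+1)-v]$ --- is handled in the paper exactly by the citation you anticipate, namely He's extension~\cite{He} of Chen--Cheng~\cite{CC}, which applies since $\Fut_{1,w}=0$ and $w(\mu_{\omega_\varphi})$ is a fixed $C^0$-bounded zeroth-order term.
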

\begin{proof} We let again $w:= 2[(n+1)-v])$. If $(X, 2\pi c_1(X))$ admits a $v$-soliton, then by \cite{HL} ${\bf D}_v$ is coersive with respect to $\T_{\C}$, and so it is ${\bf M}_{1,w}$ by Lemma~\ref{l:Mabuchi},  where we used that both ${\bf D}_v$ and ${\bf M}_{1,w}$ are $\T_{\C}$-invariant (see Lemma~\ref{TC-invariance} and \eqref{eq:Mabuchi_Ding}). Now, we can conclude by using He's extension~\cite{He} of Chen--Cheng's result~\cite{CC},  which holds for the $(1, w)$-cscK problem as $\Fut_{1, w}=0$ and the term $w(\mu_{\omega_{\varphi}})$ stays $C^0(X)$-bounded as $w$ is a fixed smooth function on $\Pol_X$. \end{proof}

\subsection{The converse}
We provide a converse of Theorem~\ref{Cor:Sol->CscK}, as a consequence the YTD correspondence established in \cite{HL}.
\begin{thm}\label{(1,w)-YTD} Let $(X, \T)$ be a smooth Fano manifold and $v$ a smooth  positive weight function on $\Pol_X$ such that  $\vol_v(X)= \vol(X)$. Then the following conditions are equivalent: 
\begin{enumerate}
\item[(i)] $(X, \T)$ admits a $(1,2[(n+1)-v])$-cscK metric in $2\pi c_1(X)$;
\item[(ii)] $(X, K^{-1}_X, \T)$ is uniformly $(1,2[(n+1)-v])$-K-stable relative to $\T_\C$, in the sense of Lahdili~\cite{lahdili}.
\item[(iii)] $(X, K^{-1}_X, \T)$ is uniformly $v$-Ding stable relative to $\T_\C$, in the sense of Han--Li~\cite{HL}.
\item[(iv)] $(X, K^{-1}_X, \T)$ is uniformly $v$-Ding stable relative to $\T_\C$ on special $\T_\C$-equivariant test configurations.
\item[(v)] $(X, \T)$  admits a $v$-soliton in $2\pi c_1(X)$.
\end{enumerate}
\end{thm}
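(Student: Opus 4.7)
The strategy is to establish the cycle of implications $(v)\Rightarrow (i)\Rightarrow (ii)\Rightarrow (iv)\Rightarrow (v)$, and to insert condition $(iii)$ via the trivial restriction $(iii)\Rightarrow (iv)$ together with $(v)\Rightarrow (iii)$. The implications $(v)\Rightarrow (i)$ and $(iv)\Rightarrow (v)$ are already in hand: the first is Theorem~\ref{Cor:Sol->CscK} above, and the second is the Han--Li YTD theorem \cite[Theorem~6]{HL} for $v$-solitons.

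For $(i)\Rightarrow (ii)$, I will invoke Lahdili's weighted YTD correspondence for $(v_{1},v_{2})$-cscK metrics \cite{lahdili}, specialised to the weights $v_{1}=1$ and $v_{2}=w:=2[(n+1)-v]$: the existence of a $(1,w)$-cscK metric in $2\pi c_{1}(X)$ implies that $(X,K_{X}^{-1},\T)$ is uniformly $(1,w)$-K-stable relative to $\T_{\C}$. For $(v)\Rightarrow (iii)$, Theorem~\ref{thm:HL} first yields coercivity of ${\bf D}_{v}$ relative to $\T_{\C}$ from the existence of a $v$-soliton, which in turn gives uniform $v$-Ding stability on \emph{all} $\T_{\C}$-equivariant test configurations by the standard slope-at-infinity computation along the associated weak geodesic rays.

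The essential step is $(ii)\Rightarrow (iv)$, for which I plan to use a non-Archimedean version of Lemma~\ref{l:Mabuchi}. Given a $\T_{\C}$-equivariant test configuration $\tau$ for $(X,K_{X}^{-1})$ and its associated weak geodesic ray $\{\omega_{t}\}_{t\ge 0}$, dividing the identity of Lemma~\ref{l:Mabuchi} by $t$ and letting $t\to\infty$ gives
\[
{\bf M}^{\mathrm{NA}}_{1,w}(\tau)\;=\;2\vol(X)\,{\bf D}^{\mathrm{NA}}_{v}(\tau)+{\bf E}^{\mathrm{NA}}(\tau),
\]
where ${\bf E}^{\mathrm{NA}}(\tau):=\lim_{t\to\infty}\bigl(-\tfrac{1}{t}\int_{X}\mathring h_{\omega_{t}}\,\omega_{t}^{[n]}\bigr)\ge 0$, the non-negativity being a direct consequence of the Jensen-type inequality $\int_{X}\mathring h_{\omega}\,\omega^{[n]}\le 0$ used in the proof of Lemma~\ref{l:Mabuchi}. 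When $\tau$ is \emph{special}, the central fibre is a $\Q$-Fano variety with klt singularities whose normalised Ricci potential stays bounded, and hence $\int_{X}\mathring h_{\omega_{t}}\,\omega_{t}^{[n]}$ grows sublinearly in $t$, forcing ${\bf E}^{\mathrm{NA}}(\tau)=0$. Consequently on special test configurations one has ${\bf M}^{\mathrm{NA}}_{1,w}(\tau)=2\vol(X)\,{\bf D}^{\mathrm{NA}}_{v}(\tau)$, and the uniform lower bound ${\bf M}^{\mathrm{NA}}_{1,w}(\tau)\ge\Lambda\,{\bf J}^{\mathrm{NA}}_{\T_{\C}}(\tau)$ supplied by $(ii)$ translates directly into the uniform $v$-Ding inequality required by $(iv)$.

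The main technical obstacle is making the non-Archimedean formalism rigorous for the weighted functionals ${\bf M}^{\mathrm{NA}}_{1,w}$ and ${\bf D}^{\mathrm{NA}}_{v}$, and in particular verifying the sublinear growth of $\int_{X}\mathring h_{\omega_{t}}\,\omega_{t}^{[n]}$ along weak geodesic rays associated to special test configurations. The relevant pluripotential-theoretic inputs and the required regularity of such rays are however already in place in~\cite{HL} and~\cite{lahdili}, so the verification should amount to an adaptation of the unweighted Fano arguments of Berman--Witt Nystr\"om~\cite{BN} and Berman--Boucksom--Jonsson to the present weighted setting.
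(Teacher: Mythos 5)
Your overall architecture matches the paper's: the cycle $(v)\Rightarrow(i)\Rightarrow(ii)\Rightarrow(iv)\Rightarrow(v)$, with $(v)\Rightarrow(i)$ from Theorem~\ref{Cor:Sol->CscK}, $(i)\Rightarrow(ii)$ from the weighted YTD results of \cite{lahdili,AJL}, $(iii)\Leftrightarrow(iv)\Leftrightarrow(v)$ from \cite{HL}, and the non-Archimedean limit of Lemma~\ref{l:Mabuchi} as the engine for $(ii)\Rightarrow(iv)$. The gap is in your treatment of that last, crucial step.

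Your claim that for a special test configuration ``the normalised Ricci potential stays bounded, hence $\int_X \mathring{h}_{\omega_t}\,\omega_t^{[n]}$ grows sublinearly'' is not a proof. The ray $\{\omega_t\}$ is only a weak geodesic: the potentials are bounded $\omega_0$-psh, the measures $\omega_t^{[n]}$ may degenerate, and $\mathring{h}_{\omega_t}$ is not classically defined along the ray, let alone uniformly bounded; nor does the ray converge to a metric on the central fibre from which one could read off a Ricci potential. More to the point, taking $v=1$, $w=2n$ in your own identity shows
\[
{\bf E}^{\rm NA}(\tstX,\tstL)\;=\;{\bf M}^{\rm NA}_{1,2n}(\tstX,\tstL)-2\vol(X)\,{\bf D}^{\rm NA}_1(\tstX,\tstL),
\]
so the vanishing you are asserting on special test configurations is precisely the nontrivial statement that the Donaldson--Futaki and Ding invariants coincide there --- i.e.\ Berman's theorem \cite[Thm.~1.3]{Berman-Inv} combined with \cite[Thm.~A]{BHJ} or \cite[Thm.~5.1]{Zak}. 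Your heuristic is therefore assuming (a known but substantive form of) the conclusion. The clean fix, which is what the paper does, is to observe that the error term in Lemma~\ref{l:Mabuchi} is independent of $v$, giving the purely non-Archimedean identity
\[
{\bf M}^{\rm NA}_{1,2[(n+1)-v]}(\tstX,\tstL)-2\vol(X)\,{\bf D}^{\rm NA}_v(\tstX,\tstL)\;=\;{\bf M}^{\rm NA}_{1,2n}(\tstX,\tstL)-2\vol(X)\,{\bf D}^{\rm NA}_1(\tstX,\tstL),
\]
whose right-hand side vanishes on special test configurations by the cited results; no asymptotic analysis of $\mathring{h}_{\omega_t}$ along rays is needed. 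A secondary omission: for $(i)\Rightarrow(ii)$, the slope ${\bf M}^{\rm NA}_{v,w}$ is defined in \cite{lahdili,AJL} only for smooth test configurations with reduced central fibre, and one must use the decomposition ${\bf M}^{\rm NA}_{1,w}={\bf M}^{\rm NA}_{1,2n}+{\bf I}^{\rm NA}_{2n-w}$ together with \cite{BHJ,DT,Zak,HL,Yao} to extend the statement to general $\T_{\C}$-equivariant test configurations; your proposal does not address this.
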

\begin{proof} The implication (i) $\Rightarrow$ (ii) is established in \cite[Cor.1]{AJL}. As a matter of fact, in this reference and in \cite{lahdili}  the authors consider the class of \emph{smooth}  $\T_{\C}$-equivariant test configurations with reduced central fibre in order to define the asymptotic slope ${\bf M}^{\rm NA}_{v, w}(\tstX, \tstL)$ of the weighted Mabuchi energy ${\bf M}_{v, w}$. However, in the case  $v=1$,   which is of interest here, we have that  $ {\bf M}^{\rm NA}_{1, w}(\tstX, \tstL)={\bf M}_{1,2n}^{\rm NA}(\tstX, \tstL) + {\bf I}^{\rm NA}_{2n-w}(\tstX, \tstL)$ where the quantities at the RHS are well-defined for arbitrary  $\T_{\C}$-equivariant test configurations:  ${\bf M}_{1, 2n}^{\rm NA}(\tstX, \tstL)$ is computed in \cite{BHJ, DT, Zak}
whereas the computation of ${\bf I}^{\rm NA}_{2n-w}(\tstX, \tstL)$ for general test configuration appears in 
see \cite{HL} (see also \cite{Yao}).  Thus, the proof  of Cor. 1 in \cite{AJL} extends to yield the desired result. 

Below we outline (ii) $\Rightarrow$ (iv),   noting that (iii) $\Leftrightarrow$ (iv)  $\Leftrightarrow$ (v) is established in \cite{HL}. By \eqref{eq:Mabuchi_Ding},  for any $\T_{\C}$-equivariant test configuration $(\tstX, \tstL)$,  
\[{\bf M}^{\rm NA}_{1, 2[(n+1)-v]}(\tstX, \tstL) - 2\vol(X) {\bf D}_v^{\rm NA}(\tstX, \tstL) = {\bf M}^{\rm NA}_{1, 2n}(\tstX, \tstL) - 2\vol(X){\bf D}_1^{\rm NA}(\tstX, \tstL). \]
By \cite[Theorem 1.3]{Berman-Inv} (which computes ${\bf D}_1^{\rm NA}(\tstX, \tstL)$) and \cite[Theorem A]{BHJ} or \cite[Theorem 5.1] {Zak} (which gives ${\bf M}^{\rm NA}_{1, 2n}(\tstX, \tstL)$), if $(\tstX, \tstL)$ is a \emph{special} test configuration, we have the equality 
\[ {\bf M}^{\rm NA}_{1, 2n}(\tstX, \tstL) = 2\vol(X) {\bf D}_1^{\rm NA}(\tstX, \tstL),\]
which yields ${\bf D}_v^{\rm NA}(\tstX, \tstL)= \frac{1}{2\vol(X)} {\bf M}^{\rm NA}_{1, 2(n+1)-v}(\tstX, \tstL)$,  hence the validity of (ii) $\Rightarrow$ (iv).

Finally, Theorem~\ref{Cor:Sol->CscK} gives (v) $\Rightarrow$ (i).
\end{proof}
The above result yields
\begin{cor} Suppose $(X, \T)$ is a smooth Fano manifold for which the extremal Futaki--Mabuchi affine-linear function $\ell_{\rm ext}$ satisfies on $\Pol_X$
\[ \ell_{\rm ext} < 2(n+1).\]
Then $X$ admits an extremal K\"ahler metric in $2\pi c_1(X)$ iff $(X, K^{-1}_X, \T)$ is uniform relatively K-stable with respect to special $\T_{\C}$-equivariant test configurations in the sense of Sz\'ekelyhidi~\cite{Sz}.
\end{cor}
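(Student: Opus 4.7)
The corollary will follow from Theorem~\ref{(1,w)-YTD} applied to the specific affine-linear weight
\[ v := (n+1) - \tfrac{1}{2}\ell_{\rm ext}. \]
First I would verify that this $v$ meets the hypotheses of Theorem~\ref{(1,w)-YTD}: the assumption $\ell_{\rm ext} < 2(n+1)$ on $\Pol_X$ makes $v$ strictly positive, and the normalization $\vol_v(X) = \vol(X)$ reduces to the identity $\int_{\Pol_X} \ell_{\rm ext}\,d\mu_{\rm DH} = 2n\,\vol(X)$. The latter follows from the Futaki--Mabuchi characterization of $\ell_{\rm ext}$ as the $L^2$-projection of $\Scal(\omega)$ onto affine-linear functions of $\mu_\omega$, combined with the Fano-class identity $\int_X \Scal(\omega)\omega^{[n]} = 2n\,\vol(X)$ (itself a consequence of $[\Ric(\omega)] = [\omega]$ in $2\pi c_1(X)$).

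With this choice of $v$, the extremal equation $\Scal(\omega) = \ell_{\rm ext}(\mu_\omega)$ rewrites exactly as $\Scal(\omega) = 2\big[(n+1) - v(\mu_\omega)\big]$, so extremal K\"ahler metrics in $2\pi c_1(X)$ are precisely the $(1, 2[(n+1)-v])$-cscK metrics in the sense of \cite{lahdili}. Therefore, the existence of an extremal metric is condition (i) of Theorem~\ref{(1,w)-YTD} for this $v$, and Theorem~\ref{(1,w)-YTD} delivers its equivalence with condition (iv): uniform $v$-Ding stability of $(X, K_X^{-1}, \T)$ relative to $\T_\C$ on special $\T_\C$-equivariant test configurations.

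It remains to translate uniform $v$-Ding stability on special test configurations into Szekelyhidi's uniform relative K-stability on the same class. For this I invoke the key identity established inside the proof of Theorem~\ref{(1,w)-YTD}: on any special $\T_\C$-equivariant test configuration $(\tstX, \tstL)$,
\[ 2\vol(X)\,{\bf D}^{\rm NA}_v(\tstX, \tstL) = {\bf M}^{\rm NA}_{1,\,\ell_{\rm ext}}(\tstX, \tstL). \]
Since $\Fut_{1,\ell_{\rm ext}}\equiv 0$ (this is the defining property of $\ell_{\rm ext}$), the non-Archimedean Mabuchi slope ${\bf M}^{\rm NA}_{1,\ell_{\rm ext}}$ is, by construction, Szekelyhidi's relative Donaldson--Futaki invariant on $(\tstX, \tstL)$, and the non-Archimedean norm (the $J$-functional, or minimum norm) used to measure uniformity is the same on both sides. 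The main—though essentially bookkeeping—obstacle is making this last identification precise at the level of definitions: one needs to compare the asymptotic slope formula for ${\bf M}_{1, w}$ in \cite{lahdili} specialized to $w=\ell_{\rm ext}$ with Szekelyhidi's definition of the relative Futaki invariant in \cite{Sz}, and to verify that their uniform versions employ equivalent NA-norms. The substantive content of the argument is otherwise contained in Theorem~\ref{(1,w)-YTD}, so no further analytic input is required.
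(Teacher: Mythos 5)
Your proposal is correct and follows essentially the same route as the paper: apply Theorem~\ref{(1,w)-YTD} with the weight $v=(n+1)-\tfrac{1}{2}\ell_{\rm ext}$, which is positive by hypothesis and satisfies $\vol_v(X)=\vol(X)$ by the Futaki--Mabuchi projection property together with $\int_X\Scal(\omega)\,\omega^{[n]}=2n\vol(X)$, and observe that extremal metrics in $2\pi c_1(X)$ are exactly the $(1,2[(n+1)-v])$-cscK metrics. The one step you defer as ``bookkeeping'' --- identifying ${\bf D}_v^{\rm NA}(\tstX,\tstL)$ (equivalently $\tfrac{1}{2\vol(X)}{\bf M}^{\rm NA}_{1,\ell_{\rm ext}}(\tstX,\tstL)$) on special test configurations with Szekelyhidi's relative Donaldson--Futaki invariant and matching the norms used for uniformity --- is precisely what the paper outsources to Hisamoto's theorem \cite{Hisamoto}; this is a genuine result rather than a definitional check, so it should be cited (or proved) rather than dismissed as routine.
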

\begin{proof} 
First note the condition $\ell_{\rm ext} < 2(n+1)$ guarantees that the weight function $(n+1)-\frac{1}{2}\ell_{\rm ext}$ is positive on $\Pol_X$. 
Hisamoto~\cite{Hisamoto} showed that $\T_{\C}$-relative Donaldson--Futaki invariant of a  special $\T_\C$-equivariant test configuration $(\tstX, \tstL)$ in the sense of \cite{Sz} equals ${\bf D}_{((n+1)-\frac{1}{2}\ell_{\rm ext})}^{\rm NA}(\tstX, \tstL)$ (and also to ${\bf M}^{\rm NA}_{1, \ell_{\rm ext}}(\tstX, \tstL)$ by the arguments in Theorem~\ref{(1,w)-YTD}). The results then follows from  Theorem~\ref{(1,w)-YTD}. 
\end{proof}

\begin{rem} The normalization condition for $v$ is necessary for having $\Fut_{1,2[(n+1)-v]}\equiv 0$, see \cite{AJL, lahdili}.
\end{rem}

\begin{cor}\label{extremal/M-soliton} Suppose $X$ is a smooth Fano variety. Then $X$ admits a Mabuchi soliton  $\omega_M\in 2\pi c_1(X)$ iff  $X$ admits an extremal K\"ahler metric $\omega_E\in 2\pi c_1(X)$ such that 
\[\Scal(\omega_E) < 2(n+1).\]
\end{cor}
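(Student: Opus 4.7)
The plan is to apply the equivalence (i)~$\Leftrightarrow$~(v) of Theorem~\ref{(1,w)-YTD} to the specific affine-linear weight
\[ v(x) := (n+1) - \tfrac{1}{2}\ell_{\rm ext}(x). \]
Two preparatory observations pin everything down. First, by the Lemma preceding Theorem~\ref{Cor:Sol->CscK}, this $v$ is precisely the $L^{2}$-projection of $e^{\mathring{h}_{\omega}}$ onto the space of Killing potentials; since the target subspace contains the constants, this projection preserves the integral against $\omega^{[n]}$, so $\vol_{v}(X)=\vol(X)$ and the normalization hypothesis of Theorem~\ref{(1,w)-YTD} is automatic. Second, with this choice of $v$ we have $2[(n+1)-v] = \ell_{\rm ext}$, so by the Futaki--Mabuchi characterization recalled in the introduction, a $(1,2[(n+1)-v])$-cscK metric is exactly an extremal K\"ahler metric.

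For the forward direction, I would suppose $X$ carries a Mabuchi soliton, i.e.\ an $\ell$-soliton for some positive affine-linear $\ell$ on $\Pol_X$. Mabuchi's identification recalled in the introduction forces $\ell = v = (n+1) - \tfrac{1}{2}\ell_{\rm ext}$ after the normalization $\vol_{\ell}(X)=\vol(X)$, and the positivity of $v$ on $\Pol_X$ is equivalent to $\ell_{\rm ext} < 2(n+1)$ on the polytope. Applying (v)~$\Rightarrow$~(i) of Theorem~\ref{(1,w)-YTD} then produces an extremal K\"ahler metric $\omega_{E}\in 2\pi c_{1}(X)$, and since $\Scal(\omega_{E})=\ell_{\rm ext}(\mu_{\omega_{E}})$ takes values in $\ell_{\rm ext}(\Pol_X)$, we conclude $\Scal(\omega_{E}) < 2(n+1)$.

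Conversely, suppose $\omega_{E}\in 2\pi c_{1}(X)$ is an extremal K\"ahler metric with $\Scal(\omega_{E}) < 2(n+1)$. Since the image of the momentum map $\mu_{\omega_{E}}:X\to\Pol_X$ is the full polytope and $\Scal(\omega_{E})=\ell_{\rm ext}(\mu_{\omega_{E}})$, the strict inequality $\ell_{\rm ext} < 2(n+1)$ holds pointwise on $\Pol_X$. Thus $v:=(n+1)-\tfrac{1}{2}\ell_{\rm ext}$ is a smooth positive weight function on $\Pol_X$ with $\vol_{v}(X)=\vol(X)$, and $\omega_{E}$ is a $(1,2[(n+1)-v])$-cscK metric. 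Applying (i)~$\Rightarrow$~(v) of Theorem~\ref{(1,w)-YTD} yields a $v$-soliton, which is a Mabuchi soliton since $v$ is affine-linear.

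The argument is essentially bookkeeping once Theorem~\ref{(1,w)-YTD} is in hand; the only genuine inputs are the projection formula identifying the Mabuchi weight with $(n+1) - \tfrac{1}{2}\ell_{\rm ext}$ (which supplies the normalization for free) and the translation between the pointwise inequality $\Scal(\omega_{E})<2(n+1)$ on $X$ and the polytope-level inequality $\ell_{\rm ext}<2(n+1)$ on $\Pol_X$ via surjectivity of the momentum map. No genuine obstacle arises in the proof itself; the real work has been done in establishing Theorem~\ref{(1,w)-YTD}.
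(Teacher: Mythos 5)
Your proposal is correct and follows exactly the route the paper intends: it specializes Theorem~\ref{(1,w)-YTD} (i)~$\Leftrightarrow$~(v) to $v=(n+1)-\tfrac12\ell_{\rm ext}$, using the projection lemma of Section~4.1 to identify the Mabuchi weight and to get the normalization $\vol_v(X)=\vol(X)$ for free, and the Futaki--Mabuchi characterization plus surjectivity of the momentum map to translate $\Scal(\omega_E)<2(n+1)$ into positivity of $v$ on $\Pol_X$. No issues.
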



\end{document}